\font\smallit=cmti10
\renewcommand\section{\@startsection {section}{1}{\z@}
{-30pt \@plus -1ex \@minus -.2ex}
{2.3ex \@plus.2ex}
{\normalfont\normalsize\bfseries\boldmath}}
\renewcommand\subsection{\@startsection{subsection}{2}{\z@}
{-3.25ex\@plus -1ex \@minus -.2ex}
{1.5ex \@plus .2ex}
{\normalfont\normalsize\bfseries\boldmath}}
\renewcommand{\@seccntformat}[1]{\csname the#1\endcsname. }
\newcommand\Pp{${\mathcal P}$-position }
\newcommand\Pps{${\mathcal P}$-positions }
\newcommand\Np{${\mathcal N}$-position }
\newcommand\Z{{\mathbb Z}}
\newcommand\Min{{\mathrm{Min}}}
\newcommand\Max{{\mathrm{Max}}}
\newcommand\OO{{\mathbb O}}
\newtheorem{theorem}{Theorem}
\newtheorem{proposition}{Proposition}
\newtheorem{corollary}{Corollary}
\theoremstyle{definition}
\newtheorem{definition}{Definition}
\newtheorem{remark}{Remark}
\newtheorem{question}{Question}
\begin{document}

\begin{center}
\uppercase{\bf On variations of Yama Nim and Triangular Nim}
\vskip 20pt
{\bf Shun-ichi Kimura\footnote{partially supported by JSPS Kakenhi 20K03514, 23K03071, 20H01798}}\\
{\smallit Department of Mathematics, Hiroshima University, Hiroshima Japan}\\
{\tt skimura@hiroshima-u.ac.jp}

\vskip 10pt

{\bf Takahiro Yamashita\footnote{}}\\
{\smallit Department of Mathematics, Hiroshima University, Hiroshima Japan}\\
{\tt d236676@hiroshima-u.ac.jp}
\end{center}
\vskip 20pt
\vskip 30pt

\centerline{\bf Abstract}
\noindent

Yama Nim is a two heaps Nim game introduced in the second author's Master Thesis, where the player  takes more than $2$ tokens from one heap, and return $1$ token to the other heap.  Triangular Nim is a generalization, where the player takes several tokens from one heap, and return some tokens (at least one token)  to the other heap, so that the total number of the tokens in the heaps decrease strictly.  In this paper, we investigate their winning strategies, Grundy numbers, and their variations and generalizations.  Particularly interesting is the Wythoff variations, where in addition to the Yama/Triangular Nim moves, the player is allowed to take tokens from both heaps, say $i$ tokens from the first heap and $j$  tokens from the other, under some restrictions for $i$ and $j$.  For example when we force $i=j>0$ for the Triangular Nim, then $(x, y)\in \Z_{\ge 0}$ with $x\le y$ is in the \Pp if and only if $(x, y)\in \{(0, 0), (0, 1), (1, 3), (3, 6), (6, 10), (10, 15), \ldots\}$, namely  the winning strategy is described by triangular numbers.  In other rulesets, we also found examples where the square numbers, pentagonal numbers, geometric progressions, and so on.

\pagestyle{myheadings} 
\thispagestyle{empty} 
\baselineskip=12.875pt 
\vskip 30pt


\section{Preliminary}

In this paper, we treat a zero-sum combinatorial game played by two players, say Left and Right.
Unless otherwise stated, we consider impartial short games \cite[pp 34-36]{S} as follows:

\begin{definition} The ruleset $\Gamma$ with the set of positions $X$ is called impartial short game if it satisfies the following three conditions:
\begin{enumerate}
\item[(1)] (Finite) Each $G\in X$ has just finitely many distinct subpositions.
\item[(2)] (Loopfree) Every run of $G\in X$ has finite length.
\item[(3)] (Impartial) Both players have exactly the same moves available from each $G\in X$.
\end{enumerate}
\end{definition}

For the most part, we work with Nim games with two heaps, and then our set of positions $X$ will be written as $$ X=\Z_{\ge 0}\times \Z_{\ge 0}=\{(x, y)\in \Z\times \Z\, | \, x\ge 0, y\ge 0\}$$
where $x$ is the number of tokens in the first heap and $y$ in the second.  Our ruleset can be described by the option function $f: X\to {\mathrm Pow}(X)$, where ${\mathrm Pow}(X)$ the set of subsets of $X$, so that from the position $m\in X$, the player can choose her/his move from $f(m)$.  We also assume normal play for the most part of this paper, namely the last player wins.  A position $m\in X$ is a \Pp if the previous player has the winning strategy, and $m$ is in \Np if the next player has the winning strategy.  We can also define Grundy numbers to give more information \cite[Chapter IV]{S}, so that $m\in X$ is in \Pp if and only if its Grundy number is $0$.  

It is well-known that a subset $S\in X$ coincides with the set of \Pps in the normal play if and only if 
\begin{enumerate}
\item[(1)] For any $m\in S$ and for any its option $m'\in f(m)$, we have $m'\not\in S$.
\item[(2)] For any $m\in X\backslash S$, there is some option $m'\in f(m)$ such that $m'\in S$.
\end{enumerate}

We start by generalizing the statement above to Grundy numbers.

\begin{proposition}\label{grundy} Let  $\Gamma$ be the ruleset and $X$ its set of positions.  Assume that $X$ is divided into disjoint subsets $X=S_0 \sqcup S_1 \sqcup \cdots$, indexed by non-negative integers, in such a way that for any $m\in S_i$, its option $m'\in f(m)$ is not $S_i$, and for any $m\in S_i$ with $j<i$, there is some option $m'\in f(m)$ such that $m'\in S_j$.  Then $S_i$ is exactly the set of positions with its Grundy numbers equal to $i$.
\end{proposition}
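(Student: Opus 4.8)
The plan is to argue by Noetherian induction on the position $m\in X$. Conditions (1) and (2) in the definition of an impartial short game (finitely many subpositions, loopfree) guarantee that the relation ``$m'$ is an option of $m$'' is well-founded, so induction on $m$ — equivalently, on the length of the longest run starting from $m$, or via a minimal-counterexample argument — is legitimate. I would recall that the Grundy number is defined recursively by $\mathcal{G}(m)=\mathrm{mex}\{\mathcal{G}(m')\mid m'\in f(m)\}$, where $\mathrm{mex}(A)$ is the least non-negative integer not belonging to $A\subseteq \Z_{\ge 0}$. The goal of the induction is to show $\mathcal{G}(m)=i$ whenever $m\in S_i$.

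For the inductive step, fix $m\in X$ and let $i$ be the unique index with $m\in S_i$ (unique because the blocks are pairwise disjoint and cover $X$). Assume as inductive hypothesis that $\mathcal{G}(m')$ equals the index of the block containing $m'$ for every option $m'\in f(m)$. Two facts then follow from the hypotheses on the partition. First, since no option of a position in $S_i$ lies in $S_i$, no value $\mathcal{G}(m')$ equals $i$. Second, for each $j<i$ there is an option $m'\in f(m)$ with $m'\in S_j$, so $\mathcal{G}(m')=j$ by the inductive hypothesis; hence $\{0,1,\ldots,i-1\}\subseteq\{\mathcal{G}(m')\mid m'\in f(m)\}$. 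Combining the two, $\mathrm{mex}\{\mathcal{G}(m')\mid m'\in f(m)\}=i$, i.e.\ $\mathcal{G}(m)=i$. This completes the induction and establishes $S_i\subseteq \mathcal{G}^{-1}(i)$ for every $i$.

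To conclude, since the blocks $S_i$ partition $X$ and the level sets $\mathcal{G}^{-1}(i)$ also partition $X$, the inclusions $S_i\subseteq\mathcal{G}^{-1}(i)$ for all $i$ force $S_i=\mathcal{G}^{-1}(i)$ for all $i$. Specializing to $i=0$ recovers the well-known characterization of \Pps quoted just before the proposition, so the proposition is indeed a strict generalization of it.

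As for the main difficulty, there is essentially no obstacle beyond bookkeeping: the one point meriting care is the justification that a genuine induction is available, namely that conditions (1) and (2) make the option relation well-founded. (It is also worth noting in passing that the hypotheses force any $m\in S_i$ to have at least $i$ options, lying in $i$ distinct blocks, which is automatically consistent with finiteness.) Everything else is a direct unwinding of the definition of $\mathrm{mex}$.
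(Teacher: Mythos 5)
Your proof is correct, but it takes a genuinely different route from the paper's. You argue directly from the recursive definition $\mathcal{G}(m)=\mathrm{mex}\{\mathcal{G}(m')\mid m'\in f(m)\}$ by well-founded induction on the option relation: the two hypotheses on the partition say precisely that $i$ is excluded from and $\{0,\dots,i-1\}$ is included in the set of option values, so the mex is $i$. The paper instead forms the disjunctive sum $\Gamma+\Z_{\ge 0}$ with a one-heap Nim and shows that $S=\{(m,i)\mid m\in S_i\}$ satisfies the two defining conditions for the set of ${\mathcal P}$-positions of the sum quoted just before the proposition; this identifies $g(m)=i$ via the standard fact that $g(m)=i$ if and only if $m$ plus a Nim heap of size $i$ is a ${\mathcal P}$-position. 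Your approach is more elementary and self-contained, needing only the definition of mex and the well-foundedness supplied by the finiteness and loopfree axioms (a point you rightly flag as the one thing requiring justification); the paper's approach avoids touching the mex recursion at all but implicitly invokes the Sprague--Grundy addition theorem. Both are complete; your closing observation that the inclusions $S_i\subseteq\mathcal{G}^{-1}(i)$ between two partitions force equality is a clean way to finish and is not needed in the paper's version, where the ${\mathcal P}$-position characterization is already an if-and-only-if.
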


\begin{proof} Let $\Z_{\ge 0}$ be the $1$-heap Nim (and its positions by abuse of notation) and consider the disjunctive sum $\Gamma+\Z_{\ge 0}$.  It is enough to show that $S=\{(m, i)\in X\times \Z_{\ge 0} \, |\, m\in S_i\}$ is the set of ${\mathcal P}$-positions.  When $(m, i)\in S$, then the options are $(m', i)$ with $m'\in f(m)$, or $(m, j)$ with $j<i$, and both of them are clearly out of $S$.  On the other hand, if $(m, k)\in X\times \Z_{\ge 0}$ with $m\in S_i$, then either $i>k$ or $i<k$.  When $i>k$, the player has the option $m'\in m$ with $m'\in S_k$, and when $i<k$, the player can choose the option $(m, k)\to (m, i)$.
\end{proof}

The following is a subset of the results by Yohei Yamasaki \cite{Y1} and \cite{Y2}, stated in a way convenient for our purpose.

\begin{definition} Let $\Gamma$ be a ruleset with $X$ the set of positions.  For each $m\in X$ with its Grundy number for the normal play $g(m)$, we determine either it is {\em in the final phase} or {\em normal phase}, inductively as follows:
\begin{enumerate} 
\item[(1)] If $g(m)=0$ and $m$ has no options, namely $f(m)=\emptyset$, then $m$ is in the final phase.
\item[(2)] If $g(m)=1$ and for all $m'\in f(m)$ with $g(m')=0$, $m'$ is in the final phase, then $m$ is in the final phase.  Otherwise, $m$ is in the normal phase.
\item[(3)] If $g(m)=0$ with $f(m)\not=\emptyset$, $m$ is in the final phase if and only if there exists an option $m'\in f(m)$ with $g(m')=1$ in the final phase.
\item[(4)] If $g(m)>1$, then $m$ is in the normal phase.
\end{enumerate}
We say $\Gamma$ is {\em admissible} if for each $m\in X$ with $g(m)>1$, either there is an option $m'\in f(m)$ with $g(m')=0$  in the normal phase, or $m"\in f(m)$ with $g(m")=1$ in the final phase.

Moreover, we say that $\Gamma$ is {\em universally admissible} if it satisfies the following $3$ conditions:
\begin{enumerate}
\item[(i)] If $m\in X$ has Grundy number $1$ and in the normal phase, for any option $m'\in f(m)$ with $g(m)=0$, $m$ is in the normal phase.
\item[(ii)] If $m\in X$ has Grundy number $0$ and in the final phase, for any option $m'\in f(m)$ with $g(m)=1$, $m$ is in the final phase.
\item[(iii)] If $m\in X$ has Grundy number $g(m)>1$, then it has options $m'_0$ and $m'_1$ so that $g(m'_0)=0, g(m'_1)=1$, and either both of them are in the normal phase, or both of them are in the final phase.
\end{enumerate}
\end{definition}

\begin{remark}\label{misere1} Universally admissible rulesets are admissible.  \end{remark} 

\begin{remark}\label{misere2} Assume that for any position $m\in X$ with $g(m)=0$, there exists some option $m'\in f(m)$ with $g(m')=1$.  Then Conway induction implies that for any $m\in X$ with $g(m)\le 1$, $m$ is in the final phase.  If moreover the ruleset is admissible, it is universally admissible. \end{remark}

\begin{proposition}\label{misereP} When the ruleset $\Gamma$ is admissible, its winning strategy for the mis\`ere play can be described as follows:  The \Pps for the mis\`ere play for $\Gamma$ is 
$$\{m\in X\, |\, \hbox{$g(m)=0$ in the normal phase}\}\cup \{m\in X\, |\, \hbox{$g(m)=1$ in the final phase}\}$$

The disjoint sum of universally admissible rulesets $\Gamma_1+\Gamma_2$ is again universally admissible, with $(m_1, m_2)$ is in the final phase if and only if  both  $m_1$ and $m_2$ are in the final phase.
\end{proposition}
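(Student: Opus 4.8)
For the first assertion the plan is the standard one. Set
\[
S=\{m\in X\mid g(m)=0\text{ in the normal phase}\}\cup\{m\in X\mid g(m)=1\text{ in the final phase}\},
\]
and verify the two conditions that characterise the $\mathcal P$-positions of the mis\`ere convention: no option of an element of $S$ lies in $S$ (and each element of $S$ has an option), and every position outside $S$ is either terminal or has an option in $S$. Both verifications split according to whether $g(m)$ equals $0$, equals $1$, or is $\ge 2$, and the two cases $g(m)\le 1$ are read off directly from the inductive definition of the phase (for instance: a Grundy-$0$ position in the normal phase is non-terminal and has no Grundy-$1$ option in the final phase; a Grundy-$1$ position in the final phase has all of its Grundy-$0$ options in the final phase; a non-terminal Grundy-$0$ position in the final phase has a Grundy-$1$ option in the final phase; a Grundy-$1$ position in the normal phase has a Grundy-$0$ option in the normal phase). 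Admissibility of $\Gamma$ is used in exactly one spot: a position with $g(m)\ge 2$ lies outside $S$ and, by admissibility, has an option of Grundy value $0$ in the normal phase or an option of Grundy value $1$ in the final phase, either of which lies in $S$.

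For the second assertion I would first invoke the Sprague--Grundy theorem to get $g(m_1,m_2)=g(m_1)\oplus g(m_2)$, and then prove, by Conway induction on $(m_1,m_2)$, the phase formula: $(m_1,m_2)$ is in the final phase if and only if both $m_1$ and $m_2$ are. Writing $a=g(m_1)$ and $b=g(m_2)$, I would organise the induction by the pair $(a,b)$. The cases $a\oplus b\ge 2$, $a=b\ge 2$, and $a\oplus b=1$ with $a,b\ge 2$ are the easy ones: $(m_1,m_2)$ has Grundy value $>1$, $0$, or $1$ respectively, and in each a short argument — for $a\oplus b>1$ simply because Grundy value $>1$ means normal phase, and otherwise because the relevant Grundy-$0$/Grundy-$1$ options of $(m_1,m_2)$ inevitably retain a coordinate of Grundy value $\ge 2$, hence are normal by the induction hypothesis — puts $(m_1,m_2)$ in the normal phase, consistent with $m_1,m_2$ not both being in the final phase. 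The substantive cases are $a=b=0$, $a=b=1$, and $\{a,b\}=\{0,1\}$; in each one identifies the options of $(m_1,m_2)$ of Grundy value $0$ or $1$ (obtained by replacing one coordinate by a suitable option of it), applies the induction hypothesis, and then combines the inductive phase rules of the summands with clauses (i) and (ii) of universal admissibility — e.g.\ the passage from ``$(m_1,m_2)$ final'' to ``$m_1$ final'' typically descends to a final Grundy-$0$ or Grundy-$1$ option and then uses the contrapositive of clause (i) (a Grundy-$1$ position with a final Grundy-$0$ option is itself final), while the reverse passage builds the required final options of $(m_1,m_2)$ from final options of $m_1,m_2$ furnished by clause (ii) and the Grundy-$1$ phase rule.

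With the phase formula available, I would verify clauses (i), (ii), (iii) for $\Gamma_1+\Gamma_2$. Clauses (i) and (ii) follow from the formula together with the corresponding clauses for $\Gamma_1,\Gamma_2$ and the phase rules, after splitting on the Grundy values of $m_1,m_2$. For clause (iii), take $(m_1,m_2)$ with $a\oplus b\ge 2$ and look at the highest set bit of $a\oplus b$; say it lies in $a$, so $b=a\oplus(a\oplus b)<a$, and also $b\oplus 1<a$ (equality would force $a\oplus b=1$). Then $m_1$ has options of Grundy values $b$ and $b\oplus 1$, yielding options of $(m_1,m_2)$ of Grundy values $0$ and $1$; when $b\ge 2$ these are both in the normal phase, and when $b\le 1$ one instead takes the Grundy-$0$ and Grundy-$1$ options of $m_1$ granted by clause (iii) for $\Gamma_1$, whose common phase is inherited by the corresponding options of $(m_1,m_2)$ because the untouched coordinate $m_2$ is shared. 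Admissibility of $\Gamma_1+\Gamma_2$ then follows from Remark~\ref{misere1}.

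The step I expect to be the real obstacle is the Conway induction for the phase of the disjunctive sum. It demands simultaneous bookkeeping of which Grundy values occur among the options of $(m_1,m_2)$, of the four inductive phase rules, and of the three universal-admissibility clauses, across a sizeable case split, and keeping the directions of the implications straight is delicate — a representative pitfall is invoking clause (ii) where what is actually needed is the defining rule that a Grundy-$0$ position in the normal phase has \emph{no} Grundy-$1$ option in the final phase.
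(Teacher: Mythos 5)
There is nothing in the paper to compare against here: the proposition is stated as a specialisation of Yamasaki's results \cite{Y1}, \cite{Y2} and the paper supplies no proof of it. Judged on its own, your roadmap is the natural direct argument and, as far as I can check, it closes. For the first assertion your two verifications are exactly right, including the points that matter under the mis\`ere convention: members of the candidate set are never terminal (terminal positions are Grundy-$0$ positions in the final phase), positions outside it need only be terminal or have an option inside it, and admissibility enters solely for positions of Grundy value at least $2$. For the phase formula of the sum, the case split you describe does work: with $a=g(m_1)$, $b=g(m_2)$, the cases with $\max(a,b)\ge 2$ need only rule (4), the mex property and the induction hypothesis applied to the untouched coordinate; in the cases $a,b\le 1$ the direction ``both final implies sum final'' uses rules (1)--(3) plus clause (ii) exactly when $\{a,b\}=\{0,1\}$, and the converse direction uses rule (3) plus, as you say, the contrapositive of clause (i) when a Grundy-$1$ coordinate must be recognised as final from a single final Grundy-$0$ option. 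Your clause (iii) check (top bit of $a\oplus b$ in $a$, hence $b<a$ and $b\oplus 1<a$, with clause (iii) of the summand when $b\le 1$ and the shared coordinate $m_2$ transmitting the common phase through the phase formula) is also correct; note only that $a\ge 2$ in that subcase, so clause (iii) of $\Gamma_1$ indeed applies. The caveat is simply that this is a skeleton rather than a written-out proof; the induction you flag as the obstacle is genuine bookkeeping but contains no trap beyond the ones you already name.
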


\section{Grundy  numbers for Yama Nim and Triangular Nim}

In this section, we study the Grundy numbers for Yama Nim and Triangular Nim.

First, we start by the ruleset of Yama Nim, which was introduced in the Master Thesis of the second author \cite{Y23}, where also its Grundy numbers and winning strategies both for the normal play and the mis\`ere play are given (see Theorem \ref{yama1} and Corollay \ref{yama2} below).

\begin{definition} Let $X=\Z_{\ge 0}\times \Z_{\ge 0}$ be the set of positions of Yama Nim.  The options are given by  $$f((x, y))=\{(x-i, y+1)\, |\, 2\le i\le x\} \cup \{(x+1, y-i)\, |\,  2\le i \le y\}.$$
\end{definition}

\begin{theorem}\label{yama1} The Grundy number for the normal play of Yama Nim is given by
$$g((x,y))=\left\{\begin{array}{cc} 0& (|x-y|\le 1)\\ \Min(x, y)+1 & (|x-y|>1)\end{array}.\right.$$
\end{theorem}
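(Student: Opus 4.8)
The plan is to apply Proposition~\ref{grundy} directly. Set
$$S_0=\{(x,y)\in X\mid |x-y|\le 1\},\qquad S_i=\{(x,y)\in X\mid |x-y|>1,\ \Min(x,y)=i-1\}\quad(i\ge 1),$$
so that $X=S_0\sqcup S_1\sqcup\cdots$ is exactly the partition induced by the claimed formula; showing that each $S_i$ is the set of positions of Grundy value $i$ then proves the theorem. Since the ruleset is invariant under swapping the two heaps, in every verification I may assume $x\le y$, so $\Min(x,y)=x$; a position of $S_i$ with $i\ge 1$ then has the form $(i-1,y)$ with $y\ge i+1$.

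First I would verify condition~(1) of Proposition~\ref{grundy}: no option of a position in $S_i$ lies in $S_i$. For $(x,y)\in S_0$ one checks that a first-heap move $(x,y)\to(x-k,y+1)$ gives $|(x-k)-(y+1)|=y+1-x+k\ge 3$ and a second-heap move $(x,y)\to(x+1,y-k)$ gives $|(x+1)-(y-k)|\ge 2$, so both leave $S_0$. For $(i-1,y)\in S_i$ with $i\ge 1$: a first-heap move lands at $(i-1-k,y+1)$ with new minimum $i-1-k\le i-3$, hence it falls in some $S_j$ with $j<i$; a second-heap move lands at $(i,y-k)$, whose minimum is either $i$ (landing in $S_{i+1}$, or in $S_0$ when $y-k-i\le 1$) or $y-k\le i-1$ (landing in $S_0$ when $y-k=i-1$, and in some $S_j$ with $j<i$ when $y-k\le i-2$). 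In every case the option avoids $S_i$.

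Next I would verify condition~(2): from $(i-1,y)\in S_i$ one can reach $S_j$ for every $j<i$. A single family of second-heap moves suffices. For $1\le j\le i-1$ take $k=y-j+1$; then $2\le k\le y$ (using $y\ge i+1\ge j+2$ and $j\ge 1$), and the move yields $(i,j-1)$, which satisfies $\Min(i,j-1)=j-1$ and $|i-(j-1)|=i-j+1\ge 2$, so it lies in $S_j$. For $j=0$ take $k=y-i+1$ (again $2\le k\le y$), yielding $(i,i-1)\in S_0$. This checks the hypotheses of Proposition~\ref{grundy} and finishes the proof.

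The only real subtlety is bookkeeping: because a token is returned to the other heap, the two types of move behave asymmetrically, so the first-heap moves out of $(i-1,y)$ are range-restricted and in fact fail to exist when $i\le 2$. The point to notice is that this asymmetry is harmless once one normalizes by $x\le y$, since the descending requirement~(2) is met entirely by second-heap moves; the genuinely fussy part is the boundary analysis in~(1), namely distinguishing whether a second-heap move lands in $S_0$, in $S_{i+1}$, or in a block strictly below~$S_i$.
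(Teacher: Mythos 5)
Your proposal is correct and follows essentially the same route as the paper: the identical partition $S_0=\{|x-y|\le 1\}$, $S_i=\{\Min(x,y)=i-1,\ |x-y|>1\}$, verified against Proposition~\ref{grundy}, with the same witnessing moves $(i-1,y)\to(i,j-1)$ and $(i-1,y)\to(i,i-1)$ for the descending condition. Your case analysis for condition (1) is a bit more explicit about which block $S_j$ each option lands in, but the argument is the same.
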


\begin{proof} We define $S_0=\{(x, y)\, |\, |x-y|\le 1\}$ and for $i>0$, $S_i=\{(x, y)\not\in S_0\, |\, \Min(x, y)+1=i\}$.  We check the conditions for Proposition \ref{grundy}. First, we start showing that if $(x, y)\in S_i$, then its option $(x', y')$ is not in $S_i$.

When $(x, y)\in S_0$, by symmetry, it is enough to show that $(x-i, y+1)\not\in S_0$ for $i\ge 2$, which follows from $$|(x-i)-(y+1)|=|(x-y)-(i+1)|\ge |i+1|-|x-y|\ge 2.$$  When $(x, y)\in S_i$ with $i>0$, to show that its options are not in $S_i$, we may assume that $x<y$ with $y-x>1$.  If $(x, y)\to (x', y')$ then either $x'<x$ (in which case $\Min(x', y')<\Min(x, y)$ and $(x', y')$ cannot be in $S_i$), or $x'=x+1$ and $y'\le y-2$, in which case if $\Min(x', y')=\Min(x, y)$, then we need to have $y'=x$, then $(x', y')\in S_0$, not in $S_i$.  

We also need to show that if $(x, y)\in S_i$ with $i>0$ and $j<i$, then there is some option $(x', y')\in S_j$ of $(x, y)$.  We may assume $x<y$ with $y-x\ge 2$.  When $j=0$, we can move to $(x+1, x) \in S_0$.  When $j>0$, we can move to $(x+1, j-1)$ because $j\le i-1 \le x\le y-2$.  
\end{proof}

\begin{corollary}\label{yama2} The position $(x, y)$ has no option (therefore its Grundy number is $0$), if and only if $x\le 1$ and $y\le 1$.  When $(x, y)$ has some option with its Grundy number $0$, then there is an option $(x', y')$ with the Grundy number $1$, so in the mis\`ere play, the \Pps are $(x, y)$ with $\Min(x, y)=0$ and $\Max(x, y)\ge 2$.  \end{corollary}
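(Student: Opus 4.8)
The plan is to dispatch the three assertions of the corollary in turn, treating Theorem~\ref{yama1} as a black box for Grundy values. For the first assertion I would simply read off the option function: the set $\{(x-i,y+1)\mid 2\le i\le x\}$ is nonempty exactly when $x\ge 2$, and $\{(x+1,y-i)\mid 2\le i\le y\}$ is nonempty exactly when $y\ge 2$, so $f((x,y))=\emptyset$ precisely when $x\le 1$ and $y\le 1$; in that case $|x-y|\le 1$, so Theorem~\ref{yama1} (or just ``$\Min$ of the empty set'') gives Grundy number $0$.

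For the second assertion, suppose $(x,y)$ has Grundy number $0$ but is not terminal. By Theorem~\ref{yama1}, $|x-y|\le 1$, and by the symmetry of the ruleset I may assume $x\le y$. Non-terminality forces $y\ge 2$ by the first part, whence $x\ge y-1\ge 1$. Then the move that removes all $y$ tokens from the second heap (the move with $i=y\ge 2$) is legal and yields $(x+1,0)$; since $|(x+1)-0|=x+1\ge 2$, Theorem~\ref{yama1} gives its Grundy number as $\Min(x+1,0)+1=1$, which is the required option.

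For the misère statement I would apply Proposition~\ref{misereP}, which needs two inputs: (a) Yama Nim is admissible, and (b) the phase of every position of Grundy number at most $1$ is known. I would prove (b) first, by Conway induction along the lines of Remark~\ref{misere2}: a terminal position has Grundy $0$ and no options, hence is in the final phase by clause~(1); a non-terminal Grundy-$0$ position has, by the second assertion, a Grundy-$1$ option, which is in the final phase by the inductive hypothesis, so clause~(3) puts the position in the final phase; a Grundy-$1$ position has all of its Grundy-$0$ options in the final phase by the inductive hypothesis, so clause~(2) puts it in the final phase. For (a): a position of Grundy number $>1$ has, by Theorem~\ref{yama1}, $\Min(x,y)\ge 1$ and $|x-y|>1$, so assuming $x<y$ we get $y\ge x+2\ge 3$, and again the move to $(x+1,0)$ is legal with Grundy value $1$, which by (b) is in the final phase -- exactly the admissibility requirement. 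Proposition~\ref{misereP} then identifies the misère $\mathcal P$-positions as the Grundy-$0$ positions in the normal phase together with the Grundy-$1$ positions in the final phase; by (b) the first set is empty and the second is the entire set $S_1=\{(x,y)\mid \Min(x,y)=0,\ \Max(x,y)\ge 2\}$ of Grundy-$1$ positions, as claimed.

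The step needing the most care is the phase bookkeeping in (b): one must run the Conway induction cleanly and, in particular, handle the terminal Grundy-$0$ positions by hand, since they satisfy clause~(1) directly but fall outside the hypothesis of Remark~\ref{misere2} as literally phrased (they have no options at all). Everything else reduces to short computations with the formula of Theorem~\ref{yama1}.
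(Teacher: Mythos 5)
Your proposal is correct and follows essentially the same route as the paper: exhibit the move to $(x+1,0)$, whose Grundy number is $1$ by Theorem~\ref{yama1}, and then invoke the machinery of Remark~\ref{misere2} and Proposition~\ref{misereP} to identify the mis\`ere \Pps with the Grundy-$1$ positions $\{(x,y)\mid \Min(x,y)=0,\ \Max(x,y)\ge 2\}$. Your extra care in running the Conway induction explicitly (handling terminal positions via clause~(1)) and in checking admissibility is a reasonable tightening of what the paper leaves implicit, but it is not a different argument; only note that the parenthetical justification for terminal positions should read ``mex of the empty set'' rather than ``$\Min$ of the empty set.''
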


\begin{proof} When $g(x, y)=0$ with non-empty options (therefore $\Max(x, y)\ge 2$), say $x\le y$ with $y-x\le 1$ and $y\ge 2$, then the player has the option $(x+1, 0)$, which has Grundy number $1$.  We can apply Remark \ref{misere2}.  \end{proof}

The \Pps of Triangular Nim is exactly  same as Yama Nim (hence the same outcome), but Grundy numbers behave in a more interesting way.  In particular, the winning strategy for the mis\`ere play, Triangular Nim is different from the mis\`ere play of Yama Nim.  This ruleset was born when the second author talked with Professor Urban Larsson, Professor Indrajit Saha, Professor Koki Suetsugu, Professor Tomoaki Abuku, and Professor Hironori Kiya about Yama Nim on the occasion of a conference at Kyushu University.

\begin{definition}\label{triangular} 
Let $X=\Z_{\ge 0}\times \Z_{\ge 0}$ be the set of positions of Triangular Nim.  The options are given by  $$f((x, y))=\{(x-i, y+j)\, |\, 2\le i\le x, 1\le j<i\} \cup \{(x+j, y-i)\, |\,  2\le i \le y, 1\le j<i\}.$$
\end{definition}

\begin{theorem}\label{tri1} The Grundy number for the normal play of Triangular Nim is given as follows:
$$g((x, y))=\left\{\begin{array}{cc} 0& (|x-y|\le 1)\\ \dfrac{d(d-1)}{2}+k & \left(\begin{array}{l} |y-x|=:d>1\\y+x\ge d^2+2\\ \hbox{$k<d$ and $x+y-(d^2+2)\equiv 2k \mod 2d$}\end{array}\right)\\
x+y-1& \hbox{($|x-y|=:d>1$ and $x+y\le d^2+1$)}\end{array}\right..$$
as in the table below:

\vskip 5mm

$\begin{array}{c||c|c|c|c|c|c|c|c|c|c|c|c|c|c|c|c}
x\backslash y&0&1&2&3&4&5&6&7&8&9&10&11&12&13&14&15\\ \hline \hline 
 0&0 & 0 & 1 & 2 & 3 & 4 & 5 & 6 & 7 & 8 & 9 & 10 & 11 & 12 & 13 & 14 \\  \hline 
 1&0 & 0 & 0 & 3 & 4 & 5 & 6 & 7 & 8 & 9 & 10 & 11 & 12 & 13 & 14 & 15 \\ \hline 
 2&1 & 0 & 0 & 0 & 1 & 6 & 7 & 8 & 9 & 10 & 11 & 12 & 13 & 14 & 15 & 16 \\ \hline 
 3&2 & 3 & 0 & 0 & 0 & 2 & 8 & 9 & 10 & 11 & 12 & 13 & 14 & 15 & 16 & 17 \\ \hline 
 4&3 & 4 & 1 & 0 & 0 & 0 & 1 & 3 & 11 & 12 & 13 & 14 & 15 & 16 & 17 & 18 \\ \hline 
 5&4 & 5 & 6 & 2 & 0 & 0 & 0 & 2 & 4 & 13 & 14 & 15 & 16 & 17 & 18 & 19 \\ \hline 
 6&5 & 6 & 7 & 8 & 1 & 0 & 0 & 0 & 1 & 5 & 15 & 16 & 17 & 18 & 19 & 20 \\ \hline 
 7&6 & 7 & 8 & 9 & 3 & 2 & 0 & 0 & 0 & 2 & 3 & 6 & 18 & 19 & 20 & 21 \\ \hline 
 8&7 & 8 & 9 & 10 & 11 & 4 & 1 & 0 & 0 & 0 & 1 & 4 & 7 & 20 & 21 & 22 \\ \hline 
 9&8 & 9 & 10 & 11 & 12 & 13 & 5 & 2 & 0 & 0 & 0 & 2 & 5 & 8 & 22 & 23 \\ \hline 
10& 9 & 10 & 11 & 12 & 13 & 14 & 15 & 3 & 1 & 0 & 0 & 0 & 1 & 3 & 9 & 24 \\ \hline 
 11&10 & 11 & 12 & 13 & 14 & 15 & 16 & 6 & 4 & 2 & 0 & 0 & 0 & 2 & 4 & 6 \\ \hline 
 12&11 & 12 & 13 & 14 & 15 & 16 & 17 & 18 & 7 & 5 & 1 & 0 & 0 & 0 & 1 & 5 \\ \hline 
13& 12 & 13 & 14 & 15 & 16 & 17 & 18 & 19 & 20 & 8 & 3 & 2 & 0 & 0 & 0 & 2 \\ \hline 
 14&13 & 14 & 15 & 16 & 17 & 18 & 19 & 20 & 21 & 22 & 9 & 4 & 1 & 0 & 0 & 0 \\ \hline 
 15&14 & 15 & 16 & 17 & 18 & 19 & 20 & 21 & 22 & 23 & 24 & 6 & 5 & 2 & 0 & 0 \\ \hline 
\end{array}$

\end{theorem}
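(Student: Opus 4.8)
The plan is to apply Proposition~\ref{grundy}: I would set $S_g$ to be the set of $(x,y)\in X$ on which the displayed formula returns $g$, and verify the two hypotheses of that proposition. It is convenient first to record a cleaner description of the option set; directly from Definition~\ref{triangular} one checks that
$$f((x,y))=\bigl\{(x',y')\in X\ :\ x'+y'\le x+y-1\ \text{ and }\ (x'>x\text{ or }y'>y)\bigr\},$$
i.e.\ a legal move is exactly a move that strictly decreases the total $x+y$ while strictly increasing one of the two coordinates. One must also check that the $S_g$ partition $X$: the point is that $x+y\equiv|x-y|\pmod 2$, which makes $x+y=d^2+1$ impossible (so the ranges $x+y\le d^2+1$ and $x+y\ge d^2+2$ meet without overlap) and makes $x+y-(d^2+2)$ a nonnegative even integer when $x+y\ge d^2+2$, so that $x+y-(d^2+2)\equiv 2k\pmod{2d}$ determines $k\in\{0,\dots,d-1\}$ uniquely.

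For the first hypothesis --- no option of a position in $S_i$ is again in $S_i$ --- I would use that every move strictly decreases $s:=x+y$, together with elementary bounds on the value $v(a,b)$ the formula assigns when $d':=|a-b|$: $v=0$ if $d'\le1$; $v=a+b-1$ if $a+b\le d'^2+1$; and $\tfrac{d'(d'-1)}{2}\le v\le\tfrac{(d'-1)(d'+2)}{2}<d'^2\le a+b-2$ if $a+b\ge d'^2+2$. The case $i=0$ is immediate from the clean move description (any move turns $|x-y|\le1$ into $|x'-y'|\ge2$). If $(x,y)$ lies in the ``corner'' range then $i=s-1$, and an option has total $\le s-1=i$, which by the bounds above cannot carry value $i$. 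If $(x,y)$ lies in the ``periodic'' range, I would observe that $(d,k)\mapsto\tfrac{d(d-1)}{2}+k$ is injective on $\{0\le k<d\}$, so a periodic option with value $i$ would need the same difference and the same residue $k$ as $(x,y)$ --- impossible, since a short computation shows any move either changes the difference or, if it keeps the difference, shifts the residue by $-\delta/2\not\equiv 0\pmod d$; a corner option with value $i$ would have total $i+1$, too small --- relative to $x$ and $y$, using $x\ge i+1$ and $y\ge i+2$, which hold throughout the periodic range --- to be reachable; and a near-diagonal option has value $0\ne i$.

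The second hypothesis --- from $(x,y)\in S_i$ every value $j<i$ is realized by some option --- is the heart of the matter; assume $x\le y$ and put $d=y-x$. For targets $j$ with $x\le j\le i-1$, the position $(j+1,0)$ has value $j$ (it lies in a corner range, since $j+1\le(j+1)^2$) and, by the clean move description, is an option of $(x,y)$ exactly when $j+1>x$ and $j+1\le s-1$; the latter holds because $i\le s-1$. For the remaining (small) targets $j<x$ --- which, when $x$ is large, occur at no \emph{reachable} corner position --- I would instead aim at a periodic position: writing $j=\tfrac{d'(d'-1)}{2}+k'$ with $0\le k'<d'$ (forcing $2\le d'\le d$), take a move that removes some $i_0$ tokens from the larger heap and returns some $j_0<i_0$ to the smaller, so that the total drops by $\delta:=i_0-j_0\ge1$ and the difference changes by $\Delta:=i_0+j_0$ (with $\Delta\ge\delta+2$, $\Delta\equiv\delta\pmod2$); choose $\Delta=d+d'$ (so the new difference is $|d-\Delta|=d'$, the case $d'=d$ being $\Delta=2d$), and then choose $\delta$ in its admissible window so that the new total $s'=s-\delta$ falls in the residue class modulo $2d'$ that realizes phase $k'$. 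The value $j=0$ is an easy separate sub-case: for every $d\ge2$ one explicit move reaches a near-diagonal position.

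The step I expect to be the real obstacle is this last one: verifying that the admissible window for $\delta$ is long enough. With $\Delta=d+d'$ the usable values of $\delta$ form an arithmetic progression of common difference $2$ and length about $\tfrac{d+d'}{2}$, while $s'=s-\delta$ must hit a prescribed one of the $d'$ residue classes modulo $2d'$ of the appropriate parity; so the window must contain at least $d'$ terms, which reduces to inequalities of the shape $d\ge d'$ together with parity-dependent off-by-one bookkeeping, and one must also separately confirm the token and nonnegativity constraints (these are comfortable because $y$ is of order $d^2$ in the periodic range). The second delicate point is the seam between the corner and periodic ranges at $s=d^2$ versus $s=d^2+2$, where the value drops from $d^2-1$ to $\tfrac{d(d-1)}{2}$; near it one may have to play the several positions carrying a given value $j$ off against one another --- the corner positions $(a,b)$ with $a+b=j+1$ and the periodic positions of a fixed difference whose totals differ by $2d'$ --- and argue that at least one of them is a legal option.
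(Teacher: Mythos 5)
Your architecture is the same as the paper's: the paper also verifies the two hypotheses of Proposition~\ref{grundy} for the level sets of the formula (in the equivalent arithmetic-progression form of Theorem~\ref{tri2}), and your clean description of the option set, the parity argument for the partition, the whole of the first hypothesis, and the easy targets $(j+1,0)$ for $j\ge x$ and a near-diagonal option for $j=0$ are correct and agree with what the paper does. The genuine gap is exactly the step you yourself single out: for targets $j<x$ you only describe a scheme (fix $\Delta=d+d'$, then ``choose $\delta$ in its admissible window'') and defer precisely the assertions that carry the theorem, namely that the window is long enough to hit the prescribed residue class mod $2d'$, that the landing total stays $\ge d'^2+2$, that the token constraints hold, and what happens near the seam. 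As written this is a plan, not a proof, of the hard half of the second hypothesis. For the record, the plan does close up, and more easily than you fear: one always has $d'\le d$ (in the corner range $j<x\le d(d-1)/2$ forces $d'<d$; in the periodic range $j<i<d(d+1)/2$ forces $d'\le d$). For $d'<d$ put $M=s-(d'^2+2+2k')$; then $M\ge d-d'\ge 1$ because $x\ge j+1$, and $M\equiv\Delta\pmod 2$. Either $M\le\Delta-2$, in which case $\delta=M$ lands exactly on $(j+1+d',\,j+1)$, or $M>\Delta-2$, in which case the admissible $\delta$'s of the right parity in $[1,\Delta-2]$ number at least $d'$ (since $d\ge d'+1$, and $d\ge d'+2$ when $d\equiv d'\pmod 2$), so one of them is $\equiv M\pmod{2d'}$, and then $s-\delta>d'^2+2$ automatically; the token bound holds because the amount removed from the larger heap is $(\Delta+\delta)/2\le d+d'-1\le d+j\le y-1$. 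The case $d'=d$ (possible only from the periodic range) is the one you already analysed: the phase shift $-\delta/2$ realizes every nonzero residue mod $d$, in particular $k'$. No separate seam analysis is needed.

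By contrast, the paper's own proof of this step avoids residue counting altogether: to reach value $h$ from $(x,y)$ with $x\ge h+1$ it takes the progression $b_k=1+h+ke$ attached to $h$, locates $x$ by $b_k\le x<b_{k+1}$, and moves to the single target $(b_{k+1},b_k)$, legal as soon as $x+y>b_k+b_{k+1}$; it then shows that failure of this inequality forces $(x,y)=(b_k,b_{k+1})$ or contradicts $(x,y)\in S_g$ with $g>h$. That choice of target, indexed by where $x$ sits in the progression rather than by fixing the change of difference, is what makes the paper's verification short; your route needs the window-counting above to reach the same conclusion.
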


\vskip 5mm

One can restate Theorem \ref{tri1} in an interesting way.  We need some preparation for the statement.

\begin{definition}  For each $g\in \Z_{\ge 0}$, take $d\in \Z_{>0}$ as the natural number which satisfies the condition $$\dfrac{d(d-1)}{2}\le g<\dfrac{d(d+1)}{2},$$ namely the $d$-th triangular number $d(d+1)/2$ is the first triangular number larger than $g$.  Take the arithmetic progression $a_{g, k}=1+g+kd$, whose first term $a_{g,0}=1+g$ with the common difference $d$.  For example for $g=0$, the arithmetic progression is  $$\{a_{0,0}, a_{0, 1}, a_{0, 2}, a_{0, 3}, \ldots\}=\{1, 2, 3, 4, \ldots\},$$ and for $g=1$, $$\{a_{1,0}, a_{1, 1}, a_{1, 2} a_{1, 3}, \ldots\}=\{2, 4, 6, 8, \ldots\},$$ and for $g=2$, we have $$\{a_{2, 0}, a_{2, 1}, a_{2, 2}, a_{2, 3}, \ldots\}=\{3, 5, 7, 9, \ldots\},$$ and for $g=3$, we have $$\{a_{3, 0}, a_{3, 1}, a_{3, 2}, a_{3,3}, \ldots\}=\{4, 7, 10, 13, \ldots\}.$$\end{definition}

\begin{theorem}\label{tri2} In Triangular Nim, if $x=y$, the Grundy number for $(x, y)$ is $0$.  When $(x, y)$ is a position with $x<y$, 
 the Grundy number  $g(x, y)$ for $(x, y)$ is given by 
$$g(x, y)=\left\{\begin{array}{cl} g& \hbox{(when $(x, y)=(a_{g, k}, a_{g, k+1})$ for some $g\ge 0$ and $k\ge 0$)}\\
x+y-1&\hbox{(otherwise)}\end{array}\right..$$
\end{theorem}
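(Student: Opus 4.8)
The plan is to deduce Theorem~\ref{tri2} from Theorem~\ref{tri1} by a purely elementary reformulation: both statements give a piecewise formula for $g(x,y)$, so it suffices to match the three clauses of Theorem~\ref{tri1} against the two clauses of Theorem~\ref{tri2}. (Alternatively one could verify directly that the sets $S_i$ predicted by Theorem~\ref{tri2} satisfy the hypotheses of Proposition~\ref{grundy}, but since Theorem~\ref{tri2} is advertised as a restatement, reducing to Theorem~\ref{tri1} is the natural route.)

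First I would dispose of the easy cases. When $x=y$ both theorems return $0$. When $x<y$ with $d:=y-x=1$, Theorem~\ref{tri1} returns $0$ because $|x-y|\le 1$; on the other hand, since $\{a_{0,k}\}_{k\ge 0}=\{1,2,3,\dots\}$ as noted after the definition, for $x\ge 1$ we have $(x,x+1)=(a_{0,\,x-1},a_{0,\,x})$, so Theorem~\ref{tri2} also returns $0$, while for $(x,y)=(0,1)$ the point is not of the form $(a_{g,k},a_{g,k+1})$ (because $a_{g,k}\ge 1$), so the ``otherwise'' clause gives $0+1-1=0$. Hence both theorems agree whenever $d\le 1$.

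For the main case $x<y$ with $d:=y-x\ge 2$, the key is a parity remark together with a dictionary between the two descriptions. Since $(x+y)-(x-y)=2y$, we have $x+y\equiv x-y=-d\equiv d\pmod 2$, hence $x+y-(d^2+2)\equiv d-d^2=-d(d-1)\equiv 0\pmod 2$ (a product of consecutive integers is even); thus $x+y-(d^2+2)$ is always even. I claim that $(x,y)$ has the form $(a_{g,k},a_{g,k+1})$ for some $g,k\ge 0$ \emph{if and only if} $x+y\ge d^2+2$. For the forward implication: if $(x,y)=(a_{g,k},a_{g,k+1})$, the common difference of the progression $a_{g,\bullet}$ equals the integer $d$ determined by $d(d-1)/2\le g<d(d+1)/2$, so this $d$ is $y-x$; writing $k':=g-d(d-1)/2\in\{0,\dots,d-1\}$ one computes $x+y=2(1+g)+(2k+1)d=d^2+2+2k'+2kd\ge d^2+2$. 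For the converse: given $x+y\ge d^2+2$, set $m:=\tfrac12\bigl(x+y-(d^2+2)\bigr)\ge 0$ and write $m=qd+k'$ with $0\le k'<d$, $q\ge 0$; put $g:=d(d-1)/2+k'$, so that the integer associated to $g$ is exactly $d$; then $x=\tfrac12\bigl((x+y)-d\bigr)=d(d-1)/2+1+k'+qd=g+1+qd=a_{g,q}$, whence $y=x+d=a_{g,q+1}$.

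Finally I would read off the conclusion. If $x+y\ge d^2+2$, equivalently $(x,y)=(a_{g,k},a_{g,k+1})$, then the middle clause of Theorem~\ref{tri1} applies: since $x+y-(d^2+2)=2k'+2kd\equiv 2k'\pmod{2d}$ with $k'<d$, the parameter appearing there is exactly $k'$, and the value it returns is $d(d-1)/2+k'=g$, which is precisely the value returned by Theorem~\ref{tri2}. If instead $x+y\le d^2+1$, equivalently $(x,y)$ is not of that form, then the last clause of Theorem~\ref{tri1} gives $x+y-1$, matching the ``otherwise'' clause of Theorem~\ref{tri2}. Together with the easy cases this exhausts all $(x,y)$ with $x\le y$. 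The only genuinely delicate point is the bookkeeping in the dictionary above — keeping straight the relation between the index $k$ in the congruence of Theorem~\ref{tri1} and the pair $(g,k)$ of Theorem~\ref{tri2}, and verifying the parity so that the congruence in Theorem~\ref{tri1} is always meaningful — but there is no real obstacle beyond careful computation.
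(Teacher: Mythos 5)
Your dictionary between the two formulas is correct as far as it goes: the parity observation, the equivalence ``$(x,y)=(a_{g,k},a_{g,k+1})$ for some $g,k\ge 0$ iff $x+y\ge d^2+2$'' (for $d=y-x\ge 2$), and the identification of the residue $k'$ with the parameter $k$ of Theorem~\ref{tri1} all check out, and this is exactly the step the paper declares ``easy to check and left to the reader.'' The problem is that this is \emph{all} your proposal contains. You deduce Theorem~\ref{tri2} from Theorem~\ref{tri1}, but the paper never proves Theorem~\ref{tri1} independently: the only proof in the paper is the single argument placed after Theorem~\ref{tri2}, which opens by dismissing the equivalence of the two statements and then establishes the actual content directly, by defining the candidate level sets $S_g=T_g\cup U_g\cup V_g$ (the positions with $x+y=g+1$ in the ``small'' regime together with the two arithmetic-progression branches) and verifying the two hypotheses of Proposition~\ref{grundy}: no move from a position of claimed value $g$ lands on another position of claimed value $g$, and from any position of claimed value $g$ every value $h<g$ is reachable, the delicate part being the comparison with the progression $b_k=1+h+ke$, $e(e-1)/2\le h<e(e+1)/2$, and the inequality analysis around $x+y\le b_k+b_{k+1}$. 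None of that game-theoretic verification appears in your proposal, so as a replacement for the paper's proof it is circular: Theorem~\ref{tri2} would rest on Theorem~\ref{tri1}, whose only justification is the very proof you are replacing.

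So the gap is not a wrong computation but a missing main argument. To make your route complete you would either have to supply an independent proof of Theorem~\ref{tri1} (which would in substance be the same verification via Proposition~\ref{grundy}, just phrased in the $(d,k)$-coordinates instead of the $a_{g,k}$-coordinates), or carry out the alternative you mention in passing: show directly that the partition of positions by the values predicted in Theorem~\ref{tri2} satisfies the hypotheses of Proposition~\ref{grundy}. Your reformulation would then be a pleasant complement (it is the bridge the paper leaves to the reader), but on its own it does not prove the theorem.
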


For example, for $(x, y)$ with $x<y$, we have \begin{eqnarray*} g(x, y)=0 &\iff& (x, y)=(0, 1), (1, 2), (2, 3), (3, 4), (4, 5),\ldots, \\ g(x, y)=1 &\iff& (x, y)=(0, 2), (2, 4), (4, 6), (6, 8), (8, 10), \ldots,\\ g(x, y)=2 &\iff& (x, y)=(0, 3), (3, 5), (5, 7), (7, 9), (9, 11), \ldots, \\g(x, y)=3 &\iff& (x, y)=(1, 3), (0, 4), (4, 7), (7, 10), (10, 13), (13, 16), \ldots, \\ g(x, y)=4 &\iff& (x, y)=(1, 4), (0, 5), (5, 8), (8, 11), (11, 14), (14, 17), \ldots. \end{eqnarray*}
Notice that except for the first few exceptions (which corresponds to the ``otherwise" cases), the arithmetic progressions appear to describe the pattern.

\begin{proof}  The equivalence of Theorem \ref{tri1} and Theorem \ref{tri2} is easy to check and left to the reader.  

We define
\begin{eqnarray*} S_0&=&\{(x, y)\, |\, |x-y|\le 1\}\\
S_1&=&\{(0, 2), (2, 4), (4, 6), (6, 8), \ldots\}\cup \{\ldots, (8, 6), (6, 4), (4, 2), (2,0)\}\\
S_2&=&\{(0, 3), (3, 0)\}\cup \{(3, 5), (5, 7), (7, 9), (9, 11), \ldots\}\cup \{\ldots, (11, 9), (9, 7), (7, 5), (5, 3)\}\\
S_3&=&\{(	1, 3), (0, 4)\}\cup \{(4, 7), (7, 10), (10, 13), (13, 16), \ldots\}\cup \{\ldots, (11, 9), (9, 7), (7, 5), (5, 3)\}\end{eqnarray*} and for general $k>0$, we define $S_g$ to be the union of the three sets $T_g\cup U_g \cup V_g$ where
\begin{eqnarray*}T_g&=&\{(x, y)\, |\, \hbox{$x+y=g+1$ and $(x+y)\le (x-y)^2+1$}\}\\ U_g&=& \{(1+g, 1+g+d), (1+g+d, 1+g+2d), (1+g+2d, 1+g+3d), \ldots\}\\ V_g&=&\{\ldots, (1+g+3d, 1+g+2d), (1+g+2d, 1+g+d), (1+g+d, 1+g)\}.\end{eqnarray*}

We will check the conditions for Proposition \ref{grundy}.  First, we show that when $(x, y)\in S_0$ with its option $(x, y)\to (x', y')$, then $(x', y')\not\in S_0$.  This is similar to the Yama-Nim case.  We have $|x-y|\le 1$, and we may assume that $(x', y')=(x+i y-j)$ where $i\ge 2$ and $i>j\ge 1$.  Then $$|x'-y'|=|(x-y)+(i+j)|\ge |i+j|-|x-y|\ge 3-1=2$$
hence $(x', y')\not \in S_0$.  When $(x, y)\in S_g$, there are two cases, either $(x, y)\in T_g$ or $(x, y)\in U_g\cup V_g$.  When $(x, y)\in T_g$, then we have $x\le g+1$ and $y\le g+1$.  When  $(x, y)\in U_g\cup V_g$, we have $x\ge g+1$ and $y\ge g+1$.  When $(x', y')$ is an option of $(x, y)$, then one component strictly increases, and the other decreases, so it is impossible to move between $T_g$ and $U_g\cup V_g$.  If both $(x, y)$ and $(x', y')$ are in $T_g$, we need to have $x'+y'<x+y$, but we have $x'+y'=g+1=x+y$, so it is impossible.  Finally we consider the case $(x, y)\in U_g\cup V_g$.  By symmetry, we may assume that $x<y$.  Let us write $a_k=1+g+kd, (k\in \{0, 1, 2, 3, \ldots)$ with $d(d-1)/2\le g<d(d+1)/2$, then we can write $x=a_k$ and $y=a_{k+1}$ for some $k$.  As $(x', y')\in U_g\cup V_g$, we need to have $x'=a_{\ell}$ and $y'=a_m$ for some $\ell$ and $m$, with $|\ell-m|=1$.  If $x'<x$ and $y'>y$, then we have $\ell<k<k+1<m$ and the relation $|\ell-m|=1$ is impossible.  If $x<x'$ and $y'<y$, then we have $k<\ell$ and $m<k+1$, then the only possibility for $|\ell-m|=1$ is the $\ell=k+1$ and $m=k$.  But then $(x', y')=(a_{k+1}, a_k)$, and $x+y=x'+y'$, which is not allowed in Triangular Nim.  We conclude that if $(x, y)\in S_g$ and $(x', y')$ is its option, then $(x', y')\not\in S_g$.

Now assume that $(x, y)\in S_g$ with $g>0$ and $h<g$, then we need to show that there is some option $(x', y')\in S_h$.  In Yama Nim, we have shown that each $(x, y)\not\in S_0$ has an option $(x', y')\in S_0$, and in Triangular Nim, we have more options than Yama Nim, so we may assume that $h>0$.
We may also assume that $x<y$, or to be more precise, $x+1<y$, as $(x, y)\not\in S_0$.  When $x\le h$, then its option contains $(h+1, 0)\in T_h\subset S_h$.  So we may assume $x\ge h+1$.  Let us consider the arithmetic progression $b_k=1+h+ke$ with $e(e-1)/2\le h<e(e+1)/2, k\in \{0, 1, 2, 3, \ldots\}$.  As $x\ge h+1=b_0$, there is some index $k$ such that $b_k\le x<b_{k+1}$.  If $x+y>b_k+b_{k+1}$, then $(b_{k+1}, b_k)\in V_h$ is an option of $(x, y)$, and we are done.  For example if $(x, y)\in U_g$, then it certainly happens, because $h<g$, we have $e\le d$ (where $d(d-1)/2\le g<d(d+1)/2$ is the number used to define $U_g$, so that $x=1+g+\ell d$ and $y=1+g+(\ell+1)d$  for some $\ell$), then $y-x\ge d\ge e$.  If $x+y\le b_k+b_{k+1}$, then $y=b_{k+1}-(x-b_k)\le e$ with the equality occurs if and only if $y=b_{k+1}$ and $x=b_k$, but it implies $(x, y)\in U_h\subset S_h$, contradicting to our assumption that $(x, y)\in S_g$ with $g>h$. 

 Now it is enough to show that if $x+y\le b_k+b_{k+1}$, then $(y-x)^2+2\le x+y$ holds, hence $(x, y)\not\in T_g$.  Assume, to the contrary, that $(y-x)^2+1\ge x+y$, then as $y=x+d$, we have $d^2+1\ge 2x+d$, and hence $x\le (d^2-d+1)/2$, which implies $x\le d(d-1)/2$, because $d^2-d+1$ is an odd number.  Then combined with $x\ge h+1>e(e-1)/2$, we obtain $$e(e-1)/2<x\le d(d-1)/2,$$  hence $e<d$.  On the other hand, our assumption $x+y\le b_k+b_{k+1}$ implies $y\le b_{k+1}-(x-b_k)\le b_{k+1}$, and hence $$b_k\le x<y\le b_{k+1}=b_k+e,$$ which implies $d=y-x\le e$, a contradiction.  
 We are done.
\end{proof}

\begin{corollary} The position $(x, y)$ is \Pp of Triangular Nim in the mis\`ere play, if and only if either $\Min(x, y)\ge 2$ and  $|x-y|\le 1$, or $(x, y)\in \{(0, 2), (2, 0)\}$. Triangular Nim is admissible, but is not universally admissible. \end{corollary}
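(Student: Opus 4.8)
My plan is to derive everything from the explicit Grundy numbers of Theorems \ref{tri1}--\ref{tri2} together with Proposition \ref{misereP}. Four things must be done: (a) decide, for every position, whether it lies in the final phase or the normal phase; (b) check that Triangular Nim is admissible; (c) read the mis\`ere $\mathcal P$-positions off Proposition \ref{misereP}; (d) exhibit one position with Grundy number $>1$ at which one of the three conditions defining universal admissibility fails.

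Step (a) carries almost all the content. By clause (4) of the phase definition every position with $g>1$ is in the normal phase, so only positions with $g\le 1$ need attention, and by Theorem \ref{tri2} these are exactly the $g=0$ positions $\{(x,y):|x-y|\le 1\}$ and the $g=1$ positions $(0,2),(2,0)$ together with the arithmetic families $(2m,2m+2),(2m+2,2m)$ for $m\ge 1$. Since $x+y$ strictly decreases along every move of Triangular Nim, the phase recursion is well founded and I would induct on $x+y$. The positions with no move, i.e.\ $x,y\le 1$, are $g=0$ in the final phase by clause (1). The unique option of $(0,2)$, resp.\ $(2,0)$, is the no-move position $(1,0)$, resp.\ $(0,1)$, so clause (2) places $(0,2),(2,0)$ in the final phase. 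For $(2m,2m+2)$ with $m\ge 1$ the only $g=0$ option is $(2m+1,2m)$, and reading off Definition \ref{triangular} one sees that $(2m+1,2m)$ has no option of Grundy number $1$; hence it is in the normal phase by clause (3), and then clause (2) puts $(2m,2m+2)$ in the normal phase — symmetrically for $(2m+2,2m)$. Likewise any $g=0$ position $(x,y)$ with $\min(x,y)\ge 2$ has, by Definition \ref{triangular}, no option of Grundy number $1$, so it too is in the normal phase by clause (3). The only $g=0$ positions with a move still unaccounted for are $(1,2)$ and $(2,1)$, whose unique option is $(2,0)$, resp.\ $(0,2)$ — a $g=1$ position already placed in the final phase — so clause (3) puts $(1,2),(2,1)$ in the final phase. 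In sum, the final-phase positions are exactly $(0,0),(0,1),(1,0),(1,1),(1,2),(2,1),(0,2),(2,0)$, and every other position is in the normal phase.

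Step (b): if $g((x,y))>1$ then $|x-y|>1$, and by symmetry take $x<y$. If $x\ge 2$, the move $(x,y)\to(x+1,x)$ appearing in the proof of Theorem \ref{tri1} lands on a $g=0$ position both of whose coordinates are $\ge 2$, hence in the normal phase — the first alternative in the definition of admissibility. If $x\le 1$, then $y\ge 3$ and $(x,y)\to(2,0)$ is a legal move onto a $g=1$ position in the final phase — the second alternative. So Triangular Nim is admissible, and Proposition \ref{misereP} says the mis\`ere $\mathcal P$-positions are the positions that either have Grundy number $0$ and lie in the normal phase, or have Grundy number $1$ and lie in the final phase. By step (a) the first family is $\{(x,y):|x-y|\le 1\}$ with the six final-phase $g=0$ positions deleted, i.e.\ $\{(x,y):|x-y|\le 1,\ \min(x,y)\ge 2\}$, and the second is $\{(0,2),(2,0)\}$; together they give exactly the set in the statement.

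Step (d): with the complete phase list of step (a) available, this becomes a finite search among small positions for a position of Grundy number $>1$ at which one of (i)--(iii) fails, together with the verification of that failure against the phase list. I expect this last point to be the main obstacle — not because any single check is difficult, but because it forces one to have step (a) exactly right, in particular the contrast between $(0,2),(2,0)$, which lie in the final phase, and the whole family $(2m,2m+2),(2m+2,2m)$, which lies in the normal phase even though every one of its members has Grundy number $1$; it is exactly this asymmetry that a common-phase pair $(m'_0,m'_1)$ as demanded by condition (iii) can run afoul of. Admissibility, by contrast, only ever needs the two robust moves $(x,y)\to(x+1,x)$ and $(x,y)\to(2,0)$, which is why Triangular Nim is admissible even though the stronger property may fail.
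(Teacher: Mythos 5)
Your overall route is the paper's own: determine the phase of every position (your final-phase list $(0,0),(0,1),(1,0),(1,1),(0,2),(2,0),(1,2),(2,1)$ is exactly the one the paper states), establish admissibility via the two moves $(x,y)\to(x+1,x)$ (for $\min\ge 2$) and $(x,y)\to(2,0)$ (for $\min\le 1$), and then read the mis\`ere \Pps off Proposition \ref{misereP}. Steps (b) and (c) are fine. However, there are two genuine problems.

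First, in step (a) the claim that every $g=0$ position with $\Min(x,y)\ge 2$ has no option of Grundy number $1$ is false. From $(2k+1,2k+2)$ with $k\ge 1$, taking $2$ tokens from the larger heap and returning $1$ is legal and lands on $(2k+2,2k)$, which has Grundy number $1$; for instance $(3,4)\to(4,2)$. Your intended conclusion (these positions are in the normal phase) still holds, because the only $g=1$ option of such a position is $(2k+2,2k)$, which your induction on $x+y$ has already placed in the normal phase, so clause (3) of the phase definition still returns ``normal''; but the step as written is wrong and needs exactly this repair, since clause (3) asks for a $g=1$ option \emph{in the final phase}, not for the absence of $g=1$ options.

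Second, and more seriously, step (d) is not a proof. The corollary asserts that Triangular Nim is \emph{not} universally admissible, and you only say that a finite search among small positions ``should'' produce a witness, without exhibiting a position or verifying any of conditions (i)--(iii) against your phase list; as it stands this half of the statement is unproven. The paper does it concretely with the single position $(1,3)$, which has Grundy number $3$ (hence is in the normal phase) and options $(2,1)$, $(2,0)$, $(3,0)$: its only $g=0$ option $(2,1)$ and its only $g=1$ option $(2,0)$ both lie in the final phase, while $(3,0)$ has Grundy number $2$, and this configuration is what the paper records as the failure of universal admissibility. Your closing remark about the asymmetry between the final-phase positions $(0,2),(2,0)$ and the normal-phase family $(2m,2m+2)$ points in the right direction, but the argument needs an explicit position together with the explicit check of its options, as above.
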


\begin{proof} First, we observe that $(x, y)$ is in the final phase if and only if $(x, y)\in \{(0, 0), (1, 0), (0, 1), (2, 0), (1 ,1), (0, 2), (2, 1), (1, 2)\}$.  For the admissibility, we take a position $(x, y)$ with Grundy number larger than $1$, where we may assume $x<y$ with $y\ge x+2$, and we need to show that either $(x', y')$ with Grundy number $1$, in the final stage, or Grundy number $0$ in the normal stage lies in the option of $(x, y)$.  From $(x, y)$ with $y\ge 2$, $(y+1, y)$ is in the normal phase with Grundy number $0$.  Also from $(x, y)$ with $x\le 1$, being Grundy number larger than $1$ implies $y\ge 3$, and hence $(2, 0)$ lies in the option, which is Grundy number $1$ in the final stage.

Now we can apply  Proposition \ref{misereP} for the ${\mathcal P}$-positions.  From $(1,3)$, the options are $\{(2, 1), (2, 0), (3, 0)\}$.  $(2, 1)$ is in the final phase with Grundy number $0$, $(2, 1)$ is in the final phase with Grundy number $1$, and $(3, 0)$ has Grundy number $2$, so it is not universally admissible. \end{proof}

\begin{question} Koki Suetsugu pointed out a similarity between Grundy number of Triangular Nim and Grundy number of Moore Nim \cite[Fig. 1]{JM}.  Is there any deep reason for this phenomenon? \end{question}

\section{$ab$-Yama Nim and  $ab$-Triangular Nim}

In Yama Nim, the player takes at least $2$ tokens from one heap, and return exactly $1$ token to the other.  In Triangular Nim, the player takes at least $2$ tokens from one heap, and return at least $1$ token to the other so that the number of the tokens in the heaps decreases.  By changing $2$ to $a$ and $1$ to $b$, we get  generalizations of  these Nim games as follows:

\begin{definition}  Let $a$ be a positive integer and $b$ an integer with $a>b$.  Let $X=\Z_{\ge 0}\times \Z_{\ge 0}$ be the set of positions of $ab$-Yama Nim and $ab$-Triangular Nim.  The options for the $ab$-Yama Nim are given by  $$f((x, y))=\{(x-i, y+b)\, |\, a\le i\le x\} \cup \{(x+b, y-i)\, |\,  a\le i \le y\}.$$
Also the options for the $ab$-Triangular Nim are given by 
$$f((x, y))=\{(x-i, y+j)\, |\, a\le i\le x, b\le j<i\} \cup \{(x+b, y-i)\, |\,  a\le i \le y, b\le j<i\}.$$
\end{definition}

To describe their ${\mathcal P}$-positions, we need the following notation.

\begin{definition} Let $a$ be a positive integer and $b$ an integer with $a>b$.  For a non-negative integer $k\in \Z_{\ge 0}$, we define $R_k(a, b)$ to be $$R_k(a, b)=\{(x, y)\in \Z_{\ge 0}^2\, |\, k(a-b)\le x, y<k(a-b)+a\}.$$  When confusion is unlikely, we simply write $R_k$ for $R_k(a, b)$. \end{definition}

\begin{theorem}\label{abyama} The set of  \Pps of  $ab$-Triangular Nim is $\displaystyle \bigcup_{k=0}^{\infty} R_k(a, b)$.  When $b\ge 0$, the set of \Pps of $ab$-Yama Nim is the same: $\displaystyle \bigcup_{k=0}^{\infty} R_k(a, b)$. \end{theorem}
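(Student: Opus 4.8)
The plan is to show directly that
$$S:=\bigcup_{k\ge 0}R_k(a,b)$$
satisfies the two conditions that characterise the set of \Pps, recalled just before Proposition \ref{grundy} (this theorem is only about outcomes, so Grundy machinery is not needed). Throughout I write $c:=a-b>0$, so that $R_k$ is the set of lattice points of the square $[kc,\,kc+a-1]^2$; for $t\ge 0$ set $K(t):=\{k\ge 0\mid kc\le t\le kc+a-1\}$, the set of ``levels'' containing $t$ in one coordinate, so that $(x,y)\in S$ if and only if $K(x)\cap K(y)\ne\emptyset$. Both rulesets are symmetric under exchanging the two heaps, and every move strictly decreases one heap by some $i\ge a$ while changing the other by $+j$, where $j=b$ for $ab$-Yama and $b\le j<i$ for $ab$-Triangular; in particular $j\ge b$ in both cases. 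So it is enough to treat positions with $x\le y$ and moves $(x,y)\mapsto(x+j,\,y-i)$, the mirror case being identical.

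For Condition (1) (no option of a position of $S$ lies in $S$) I would argue: let $(x,y)\in R_k$ and suppose the move decreases the first heap, $(x,y)\mapsto(x-i,\,y+j)$ with $i\ge a$ and $j\ge b$. Then $x-i\le(kc+a-1)-a=kc-1$, so if $(x-i,\,y+j)\in R_m$ we must have $mc\le kc-1$, hence $m\le k-1$, and therefore $y+j\le mc+a-1\le(k-1)c+a-1=kc+b-1$; but $y+j\ge kc+b$ since $y\ge kc$, a contradiction. This uses only $i\ge a$ and $j\ge b$, so it covers both games and every admissible $b$.

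For Condition (2) (every position outside $S$ has an option in $S$), fix $(x,y)\notin S$ with $x\le y$ and put $p:=\lfloor x/c\rfloor$, so $pc\le x\le(p+1)c-1$ and hence $pc\le y$. The key step is that $y\ge pc+a$: otherwise $pc\le y\le pc+a-1$ gives $p\in K(y)$, and since $(x,y)\notin S$ we would need $p\notin K(x)$, i.e.\ $x\ge pc+a$, whence $y\ge x\ge pc+a>pc+a-1\ge y$, a contradiction. Granting this, set $i:=y-pc$; then $a\le i\le y$, so decreasing the second heap by $i$ is legal and brings it exactly to $pc$. It remains to choose the compensating $j$ with $x+j\in[pc,\,pc+a-1]$ and $x+j\ge 0$. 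For $ab$-Yama take $j=b$: from $pc\le x\le(p+1)c-1=pc+a-1-b$ we get $pc\le x+b\le pc+a-1$ and $x+b\ge0$. For $ab$-Triangular, the set $\{x+j\mid b\le j\le i-1\}=\{x+b,\dots,x+i-1\}$ is an interval meeting $[pc,\,pc+a-1]$ because $x+b\le pc+a-1$ (as above) and $x+i-1\ge x\ge pc$; any $j$ in the overlap works and automatically gives $x+j\ge pc\ge 0$. In both cases the option $(x+j,\,pc)$ lies in $R_p\subseteq S$.

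The only genuine difficulty I anticipate is this ``key step'' of Condition (2) together with the bookkeeping that follows it — simultaneously arranging $i\ge a$, $0\le x+j$, $x+j$ in the correct block, and, for Triangular, $b\le j<i$. Once the blocks are written as $[kc,kc+a-1]^2$ these all reduce to one-line interval inequalities, and the same argument handles uniformly the three regimes $b\ge1$ (consecutive blocks overlap), $b=0$ (they tile $\Z_{\ge0}$), and $b<0$ (gaps appear between blocks), since it uses nothing about $b$ beyond the definition $p=\lfloor x/c\rfloor$. Finally, the $ab$-Yama statement is restricted to $b\ge0$ precisely because for $b<0$ the move $(x,y)\mapsto(x+b,\,y-i)$ could create a negative heap, whereas $ab$-Triangular always retains the freedom to choose $j\ge0$.
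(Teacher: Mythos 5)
Your proof is correct and rests on the same overall strategy as the paper's: verify directly that $S=\bigcup_k R_k$ satisfies the two defining conditions for the set of ${\mathcal P}$-positions, using the block structure of the squares $R_k=[kc,kc+a-1]^2$. Your Condition (1) computation (a move pushes one coordinate below $kc$, forcing a landing block $R_m$ with $m\le k-1$, while the other coordinate stays at least $kc+b>mc+a-1$) is essentially the paper's argument in different notation. Where you genuinely differ is Condition (2): the paper fixes $k=\lfloor x/(a-b)\rfloor$ and then splits into $b\ge 0$ versus $b<0$, with a further subcase split, each time naming one explicit target such as $(x+b,k(a-b))$ or $(x,k(a-b))$; you instead prove once that $y\ge pc+a$, take $i=y-pc$, and observe that the interval of reachable first coordinates $\{x+b,\dots,x+i-1\}$ must meet $[pc,pc+a-1]$, so some legal compensating $j$ exists. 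This uniform interval-overlap argument is not only cleaner but also more robust: in the paper's subcase $b<0$, $k(a-b)+a\le x$, the designated option $(x+b,k(a-b))$ can have a negative first coordinate when $a+b<0$ (for instance $a=2$, $b=-3$, $(x,y)=(2,3)$ yields $(-1,0)$), whereas your argument simply picks a different $j$ from the overlap (here $j=-1$ or $j=-2$, landing in $R_0$). So your write-up proves the full statement and, as a bonus, repairs a small gap in the published treatment of very negative $b$.
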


\begin{proof}  Take a position $(x, y)\not\in \bigcup R_k$ with $x\le y$.  We would like to show that $(x ,y)$ has some option $(x', y')$ in some $R_k$.  First, we assume that $b\ge 0$, and let $k=\left[\dfrac{x}{a-b}\right]$, then as $k(a-b)\le x<(k+1)(a-b)=k(a-b)+a-b\le k(a-b)+a$, in order that $(x, y)\not\in R_k$, we need $k(a-b)+a\le y$.  Then $(x, y)$ can be sent to $(x+b, k(a-b))$ both in $ab$-Yama Nim and $ab$-Triangular Nim, and $(x+b, k(a-b))$ is in $R_k$ because $k(a-b)\le x\le x+b<(k+1)(a-b)+b=k(a-b)+a$.  

When $b<0$ with $ab$-Triangular Nim, again we let $k=\left[\dfrac{x}{a-b}\right]$.  When $x<k(a-b)+a$, then in order that $y\not\in R_k$, $y$ must satisfy the inequality $k(a-b)+a\le y$.  Then because $b<0$, we can move $(x, y)$ to $(x, k(a-b))\in R_k$ in $ab$-Triangular Nim.  On the other hand, if $k(a-b)+a\le x$, then because $x\le y$, we have  $k(a-b)+a\le y$, and also by the choice of $k$, we have $x<(k+1)(a-b)=k(a-b)+a-b$, hence $x+b<k(a-b)+a$.  We can send $(x, y)$ to $(x+b, k(a-b))$, which lies in $R_k$.  

Now we assume that $(x, y)\in R_k$, and we need to show that $(x, y)$ cannot be sent to $\displaystyle \bigcup_{k=0}^{\infty} R_k(a, b)$. Assume not, and $(x, y)$ is sent to $(x', y')\in R_{\ell}$.  As the options of $ab$-Yama Nim is a subset of $ab$-Triangular Nim, it is enough to treat $ab$-Triangular Nim case.  By symmetry, we may assume that $x+b\le x'$ and $y'\le y+a$.  By our assumption $(x, y)\in R_k$ and $(x', y')\in R_{\ell}$, we have $$k(a-b)\le x, y<k(a-b)+a,$$ and $$\ell(a-b)\le x', y' <\ell(a-b)+a.$$  Then we have $$k(a-b)+a>y\ge y'+a\ge \ell(a-b)+a,$$ $$\ell(a-b)+a>x'\ge x+b\ge k(a-b)+b.$$  Combining, we obtain $$k(a-b)+a>\ell (a-b)+a>k(a-b)+b,$$ and subtracting $b$ from all the terms, we have $$(k+1)(a-b)>(\ell+1)(a-b)>k(a-b),$$  and as $a-b>0$, dividing by $a-b$, we can conclude $k+1>\ell+1>k$, which is impossible for integers $k$ and $\ell$.
\end{proof}

In the definition of $ab$-Yama Nim and $ab$-Triangular Nim, we are grateful to Ryuya Hora who pointed out the possibility of considering the case $b\le 0$.

\section{Wythoff twist}

In classical Wythoff Nim, in addition to the usual Nim move, namely taking some tokens from $1$ heap, the player is allowed to take tokens from $2$ heaps, as far as she/he takes the same number of tokens from both heaps.  In this section, we consider similar twists for Yama Nim and Triangular Nim.  

\begin{definition} Let $c\in \Z_{\ge 0}$ be a non-negative integer.  In Yama Nim with $c$-Wythoff twist and Triangular Nim with $c$-Wythoff twist, the set of positions is $X=\Z_{\ge 0}\times \Z_{\ge 0}$, and for a position $(x, y)\in X$, the options are 
\begin{eqnarray*}&&\{(x-i, y+1)\, |\, 2\le i\le x\}\\ &\cup& \{(x+1, y-i)\, |\, 2\le i\le y, \}\\ &\cup& \{(x-i, y-j)\, |\, 1\le i\le x, 1\le j\le y, |i-j|\le c\}\end{eqnarray*}
for Yama Nim with $c$-Wythoff twist, and
\begin{eqnarray*}&&\{(x-i, y+j)\, |\, 2\le i\le x, 1\le j<i\}\\ &\cup& \{(x+j, y-i)\, |\, 2\le i\le y, 1\le j<i\}\\ &\cup& \{(x-i, y-j)\, |\, 1\le i\le x, 1\le j\le y, |i-j|\le c\}\end{eqnarray*} for Triangular Nim with $c$-Wythoff twist.
\end{definition}

\Pps for Yama Nim with $c$-Wythoff twist is easy to describe.

\begin{proposition} The set of \Pps for Yama Nim with $c$-Wythoff twist is $S=S_1 \cup S_2 \cup S_3$ where
\begin{eqnarray*}S_1&=&\{(0,0), (0, 1), (1, 0)\}\\ S_2&=&\{(x, 1)\, |\, x\ge 3+c\}\\ S_3&=&\{(1, y)\, |\, y\ge 3+c\}\end{eqnarray*}
\end{proposition}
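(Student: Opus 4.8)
The plan is to verify the two defining properties of the set of $\mathcal{P}$-positions recalled in Section~1: first, that no option of a position in $S$ lies in $S$ (closure), and second, that every position outside $S$ has an option in $S$ (absorption). Before starting I would record the symmetry of the situation: swapping the two heaps is an automorphism of the ruleset, and it fixes $S_1$ while interchanging $S_2$ and $S_3$; hence $S$ is symmetric, and I may assume $x\le y$ whenever convenient.

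For closure I would argue as follows. The three positions in $S_1$ are terminal: a Yama move needs a heap of size $\ge 2$ and a Wythoff move needs both heaps nonempty, so $(0,0),(0,1),(1,0)$ have no options at all. By symmetry it then remains to examine a position $(x,1)\in S_2$ with $x\ge 3+c$. Its Yama options all look like $(x-i,2)$ with $2\le i\le x$, and a point whose second coordinate is $2$ cannot lie in $S$ (it is not in $S_1\cup S_2$, and not in $S_3$ since $3+c\ge 3>2$). Its Wythoff options are forced to take $j=1$, so the constraint $|i-j|\le c$ gives $i\le c+1$ and the option is $(x-i,0)$ with $x-i\ge x-(c+1)\ge 2$; such a point lies in none of $S_1,S_2,S_3$. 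This finishes closure.

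For absorption I would take $(x,y)\notin S$ with $x\le y$ and split into cases according to the size of $x$ and of $y-x$. If $x=0$ then $y\ge 2$ and the Yama move emptying the second heap gives $(0,y)\to(1,0)\in S_1$. If $x=1$ then $1\le y\le c+2$ (larger $y$ already lies in $S_3$); for $y=1$ the Wythoff move $i=j=1$ reaches $(0,0)$, and for $2\le y\le c+2$ the Wythoff move $i=1$, $j=y-1$ is legal because $|1-(y-1)|=y-2\le c$ and reaches $(0,1)$. If $x\ge 2$ and $y-x\le c$, the Wythoff move $i=x$, $j=y$ reaches $(0,0)$. The remaining case is $x\ge 2$ with $y\ge x+c+1$: when $x\ge 3$ the Yama move with $i=x-1$ reaches $(1,y+1)$, which is in $S_3$ because $y+1\ge x+c+2\ge 3+c$; when $x=2$ and $y\ge c+4$ the Wythoff move $i=j=1$ reaches $(1,y-1)\in S_3$; and the single leftover position $(2,c+3)$ is sent to $(0,1)\in S_1$ by the Wythoff move $i=2$, $j=c+2$, legal since $|2-(c+2)|=c$. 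Together these exhaust $X\setminus S$.

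Most of this is bookkeeping: checking each move satisfies its legality constraints (especially $|i-j|\le c$ at the boundary) and lands in the advertised component of $S$. The one place I expect to need real care, and thus the main obstacle, is the last family of absorption cases, where $x=2$ must be handled separately from $x\ge 3$ because the Yama move that reduces the first heap to size $1$ requires $x\ge 3$; the threshold position $(2,c+3)$ then falls through the cracks and must be routed to $(0,1)$ by a dedicated Wythoff move. Everything else should be a direct computation.
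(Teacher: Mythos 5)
Your proposal is correct and follows essentially the same route as the paper: verify directly that no option of $S$ stays in $S$ and that every position outside $S$ has an explicit move into $S$, with the same kinds of Yama and Wythoff moves in each case. The only difference is minor bookkeeping in the absorption step (you send $(x,y)$ with $y-x\le c$ to $(0,0)$ and treat $(2,c+3)$ separately, where the paper splits on $x=2$ versus $x\ge 3$ and on $y\le 3+c$ versus $y\ge 4+c$), and your split in fact handles the borderline position $(2,2)$ for $c=0$ a bit more cleanly.
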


\begin{proof} We need to show that (1) when $(x, y)\in S$, and option is outside $S$, and (2) when $(x, y)\not\in S$, it has some option in $S$.

For (1), if $(x, y)\in S_1$, then it has no options at all.  For the rest, by symmetry, we may assume $(x, y)=(x, 1)\in S_2$ with $x\ge 3+c$.  In Yama Nim option, $(x, 1)$ must be sent to $(x-i, 2)$, which is not in $S$.  In $c$-Wythoff option, $(x, 1)$ is sent to $(x-i, 0)$ with $|i-1|\le c$, hence $i\le c+1$ and $x-i\ge x-c-1\ge 2$, which is again not in $S$.

For (2), we may take $(x, y)\not\in S$ with $x\le y$.  When  $x=0$, we have $y\ge 2$, and $(0, y)$ has a Yama Nim option $(1, 0)\in S_1$.  When $x=1$, we have $y\le 2+c$, and $(1, y)$ has a $c$-Wythoff option $(0, y-1-c)\in S_1$.  When $x=2$, in case $y\le 3+c$, then $|(y-1)-2|\le c$ and we  have a $c$-Wythoff option $(2, y)\to (0, 1)\in S_1$.  On the other hand, when $y\ge 4+c$, then we have a $c$-Wythoff option $(2, y)\to (1, y-1)\in S_2$.  Finally when $3\le x \le y$, then we have a Yama Nim option $(x, y)\to (1, y+1)\in S_2$.  \end{proof}

\begin{corollary} In Yama Nim with $c$-Wythoff twist, all positions with Grundy number less than equal to $1$ are in final stage, hence Yama Nim with $c$-Wythoff twist is universally admissible. \end{corollary}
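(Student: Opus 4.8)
The plan is to invoke Remark~\ref{misere2}: once we know that every position $m$ with $g(m)=0$ has an option of Grundy number $1$, the Conway induction in that remark forces every position with $g(m)\le 1$ into the final phase, and then admissibility upgrades this to universal admissibility. By the preceding Proposition the positions of Grundy number $0$ are exactly the members of $S=S_1\cup S_2\cup S_3$, so only these three families need to be examined.

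First I would dispose of $S_1=\{(0,0),(0,1),(1,0)\}$: each of these has empty option set, so clause~(1) of the definition of the final phase already places them there, and nothing needs to be supplied. For $S_2$ and $S_3$ I would exhibit an explicit Grundy-$1$ option. A one-line computation of option sets gives $g((0,2))=1$ (its unique option is $(1,0)$, of Grundy $0$) and, symmetrically, $g((2,0))=1$. Every $(x,1)\in S_2$ has $x\ge 3+c\ge 3$, so the Yama move that empties the first heap sends it to $(0,2)$; likewise every $(1,y)\in S_3$ has the option $(2,0)$. This verifies the hypothesis of Remark~\ref{misere2} for all Grundy-$0$ positions with nonempty options (the no-option ones being the base of the induction), so every position with Grundy number at most $1$ is in the final phase.

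It remains to confirm universal admissibility. Conditions (i) and (ii) are vacuous or immediate once we know that no Grundy-$1$ position is in the normal phase and every Grundy-$1$ position is in the final phase. For condition (iii), a position $m$ with $g(m)>1$ satisfies $g(m)=\mathrm{mex}\{g(m')\mid m'\in f(m)\}$, hence has options $m'_0,m'_1$ with $g(m'_0)=0$ and $g(m'_1)=1$; by the previous paragraph both are in the final phase, which is exactly (iii). The same remark shows $m$ has a Grundy-$1$ option in the final phase, so the ruleset is admissible, and one may instead finish by quoting the last sentence of Remark~\ref{misere2}. The only delicate point in the whole argument is the bookkeeping around Remark~\ref{misere2}, whose literal hypothesis fails at the three no-option positions; one has to notice that clause~(1) of the phase definition handles them directly so that the underlying Conway induction still runs. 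Everything else is routine.
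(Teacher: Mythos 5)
Your proof is correct and takes essentially the same route as the paper: every Grundy-$0$ position with a nonempty option set can be sent to $(0,2)$ or $(2,0)$, whose unique options $(1,0)$ and $(0,1)$ give them Grundy number $1$, and then Remark~\ref{misere2} yields the final-phase statement and universal admissibility. Your additional bookkeeping (handling the three no-option positions via clause~(1) of the phase definition and extracting Grundy-$0$ and Grundy-$1$ options from the mex for $g(m)>1$) merely fills in details the paper leaves implicit.
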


\begin{proof} Whenever $(x, y)\in S$ has some option, it can be sent to $(0, 2)$ or $(2, 0)$, and their options are $\{(1, 0)\}$ and $\{(0, 1)\}$ respectively in $S$, hence $(0, 2)$ and $(2, 0)$ have Grundy number $1$. \end{proof}

\begin{remark} The set of \Pps for Yama Nim with $c$-Wythoff twist is, when $c=0$, $$\{(0,2), (1, 1), (2 ,0), (2, 3), (3, 2)\}\cup \{(x, 3)\, |\, x\ge 6\} \cup \{(3, y)\, |\, y\ge 6\},$$
and for $c\ge 1$, 
$$\{(0,2), (1, 1), (2 ,0)\}\cup \{(x, 2)\, | \, x\ge 5+c\} \cup \{(2, y)\, |\, y\ge 5+c\}.$$
\end{remark}

To state the \Pps for Triangular Nim with $c$-Wythoff twist, it is convenient to define the following sequence.

\begin{definition} When $c\in \Z_{\ge 0}$ is a non-negative integer, we define the sequence $a_{[c], 0}, a_{[c], 1}, a_{[c], 2}, \ldots$ by $$a_{[c], k}=\dfrac{1+c}{2}k^2+\dfrac{1-c}{2}k.$$
In other words, the sequence $a_{[c], 0}, a_{[c], 1}, a_{[c], 2}, \ldots$ are polygonal numbers with the number of sides $c+3$.  When $0\le c \le 3$, \begin{eqnarray*}a_{[0], 0} ,a_{[0], 1}, a_{[0], 2}, \ldots&=&0, 1, 3, 6, 10, 15, 21, \ldots\\ a_{[1], 0} ,a_{[1], 1}, a_{[1], 2}, \ldots&=&0, 1, 4, 9, 16, 25, 36, \ldots\\a_{[2], 0} ,a_{[2], 1}, a_{[2], 2}, \ldots&=&0, 1, 5, 12, 22, 35, 51, \ldots\\a_{[3], 0} ,a_{[3], 1}, a_{[3], 2}, \ldots&=&0, 1, 6, 15, 28, 45, 66, \ldots\end{eqnarray*}
\end{definition}

\begin{theorem} \label{Wythoff Triangular} The set of \Pps for Triangular Nim with $c$-Wythoff twist is $$\{(0,0)\}\cup \{(a_{[c], k}, a_{[c], k+1)})\, |\, k=0,1, 2, 3, \ldots\}\cup  \{(a_{[c], k+1}, a_{[c], k)})\, |\, k=0,1, 2, 3, \ldots\}$$
\end{theorem}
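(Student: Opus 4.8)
\noindent\emph{Plan of proof.} I will verify directly that the set $S$ on the right-hand side satisfies the two characterising conditions of the set of $\mathcal P$-positions recalled in Section~1: \textbf{(P1)} no option of a position of $S$ lies in $S$, and \textbf{(P2)} every position outside $S$ has an option in $S$. Three facts are used throughout. First, $(x,y)\mapsto(y,x)$ is an automorphism of the ruleset fixing $S$, so it suffices to treat positions with $x\le y$. Second, every legal move strictly decreases the total $x+y$ (a Triangular move by $i-j>0$, a Wythoff move by $i+j>0$). Third, writing $a_k:=a_{[c],k}$, one computes $a_{k+1}-a_k=1+(c+1)k=:d_k$ and $a_k+a_{k+1}=(c+1)k^2+2k+1=:\sigma_k$, both strictly increasing in $k\ge0$; the elements of $S$ with $x\le y$ are exactly $(0,0)$ and the ``staircase'' points $(a_k,a_{k+1})$, $k\ge0$.

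\noindent\emph{Stability (P1).} Let $(x,y)\in S$, $x\le y$. The positions $(0,0)$, $(a_0,a_1)=(0,1)$, $(a_1,a_0)=(1,0)$ have no options, so take $(x,y)=(a_k,a_{k+1})$ with $k\ge1$ and suppose an option $(x',y')$ lies in $S$. Since the total drops, $(x',y')$ is $(0,0)$ or $\{x',y'\}=\{a_m,a_{m+1}\}$ with $\sigma_m<\sigma_k$, hence $m<k$; in particular both coordinates of $(x',y')$ are at most $a_{m+1}\le a_k$. Now a Triangular move on the first heap raises the second coordinate above $a_{k+1}$, and a Triangular move on the second heap raises the first coordinate above $a_k$ — in either case a coordinate of the outcome exceeds both coordinates of every admissible target, a contradiction. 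And a Wythoff move landing in $S$ has token-decreases $i,j$ with $|i-j|$ equal to $d_k$ (target $(0,0)$), to $d_k-d_m=(c+1)(k-m)$ (target $(a_m,a_{m+1})$, $m<k$), or to $d_k+d_m$ (target $(a_{m+1},a_m)$, which additionally needs $k\ge m+2$) — each of which exceeds $c$, contradicting the twist constraint. Hence (P1) holds.

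\noindent\emph{Absorption (P2).} Let $(x,y)\notin S$, $x\le y$; then $x+y\ge3$. If $x=y$ (so $x\ge1$), the Wythoff move removing $x$ tokens from each heap gives $(0,0)\in S$. If $x=0$ (so $y\ge2$), the Triangular move on the second heap taking all $y$ tokens and returning one gives $(1,0)\in S$. Otherwise $1\le x<y$; let $\ell\ge0$ be the index with $\sigma_\ell<x+y\le\sigma_{\ell+1}$ (it exists since $\sigma_0=1<x+y$ and $\sigma_k\to\infty$). If $x<a_{\ell+1}$, the Triangular move on the second heap carrying $(x,y)$ to $(a_{\ell+1},a_\ell)\in S$ is legal: writing $j=a_{\ell+1}-x\ge1$ and $i=y-a_\ell$, one checks $i\ge2$ (from $y>\sigma_\ell-x\ge a_\ell+1$), $i\le y$, and $j<i\Leftrightarrow\sigma_\ell<x+y$. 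If instead $x\ge a_{\ell+1}$, then $x+y\le\sigma_{\ell+1}$ forces $a_{\ell+1}\le x\le y\le a_{\ell+2}$, and $(x,y)\ne(a_{\ell+1},a_{\ell+2})$ forces $1\le y-x\le d_{\ell+1}-1=d_\ell+c$. The integer intervals $[d_m-c,\,d_m+c]$, $m=0,\dots,\ell$, cover $\{1-c,\dots,d_\ell+c\}$ with no gaps, because consecutive ones differ only by $d_{m+1}-d_m=c+1\le 2c+1$; so some $m\le\ell$ has $|d_m-(y-x)|\le c$, and then the Wythoff move with $i=x-a_m$, $j=y-a_{m+1}$ has $|i-j|=|d_m-(y-x)|\le c$, has $i=x-a_m\ge a_{\ell+1}-a_\ell=d_\ell\ge1$ and $j=y-a_{m+1}\ge y-a_{\ell+1}\ge y-x\ge1$, and reaches $(a_m,a_{m+1})\in S$. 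This gives (P2) and hence the theorem.

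\noindent\emph{The main obstacle.} The delicate point is (P2) inside the band $a_{\ell+1}\le x\le y\le a_{\ell+2}$. There one first notes that \emph{no} Triangular move can reach $S$ — such a move either enlarges the bigger heap or produces a total larger than every $\sigma_m<x+y$ — so one is forced onto a Wythoff move, and must check that the Wythoff moves onto the staircase points $(a_m,a_{m+1})$, $0\le m\le\ell$, collectively realise \emph{every} value of $y-x$ that can occur in the band. That is exactly the covering statement above, whose validity hinges on the numerically tight bound $d_{m+1}-d_m=c+1\le 2c+1$. Everything else reduces to routine bookkeeping with the closed forms for $d_k$ and $\sigma_k$, and I expect the equivalence of this statement with Theorem~\ref{tri1}-style tables to be checked directly from those formulas.
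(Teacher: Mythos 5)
Your argument is correct and takes essentially the same route as the paper's own proof: both verify the two defining conditions for the candidate set directly, using the computation $a_{[c],k+1}-a_{[c],k}=1+k(c+1)$ to rule out options staying in the set, a Triangular move to the reversed staircase point $(a_{[c],\ell+1},a_{[c],\ell})$ when the total is large enough, and a $c$-Wythoff move onto a staircase point $(a_{[c],m},a_{[c],m+1})$ otherwise. The only cosmetic differences are that you index positions by the total $x+y$ rather than by where $x$ falls among the $a_{[c],k}$, and you select the Wythoff target via an interval-covering argument where the paper simply writes the index as $\left[\frac{y-x-1}{c+1}\right]$.
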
  

For example when $c=0$, the list of \Pps $(x, y)$ with $x\le y$ are
$$(0, 0), (0, 1), (1, 3), (3, 6), (6, 10), (10, 15), (15, 21), (21, 28) ,\ldots $$
consecutive triangular numbers, and when $c=1$, the \Pps with $x\le y$ are
$$(0, 0), (0, 1), (1, 4), (4, 9), (9, 16), (16, 25), (25, 36), (36, 49), \ldots$$
and so on.

\begin{proof} We fix $c\in \Z_{\ge 0}$ and simply write $a_k$ for $a_{[c], k}$.  Let $S$ to be $$S=\{(0, 0)\}\cup \{(a_k ,a_{k+1})\, |\, k=0, 1, 2, \ldots\}\cup \{(a_{k+1}, a_k)\, |\, k=0, 1, 2, \ldots\}.$$
We need to show: \begin{enumerate} \item[(1)] For any $(x, y)\in S$, its option $(x', y')$ is not in $S$. \item[(2)] For any  $(x, y)\not\in S$, there is an option  $(x', y')\in S$.\end{enumerate}

For (1), we take $(x, y)\in S$.  As $(0, 0)$ has no options, and by symmetry, we may assume $(x, y)=(a_k, a_{k+1})$.  By definition of $a_k=a_{[c], k}$, we have $a_{k+1}-a_k=k(c+1)+1$, which make an arithmetic progression $1, 1+(c+1), 1+2(c+1), 1+3(c+1), \ldots$, with the initial term $1$ and the common difference $c+1$.  

When $(x', y')$ is a Wythoff option of $(x, y)$, say $x'=x-i$ and $y'=y-j$ with $|i-j|\le c$, then $|x'-y'|=|(x-y)-(i-j)|$, and hence we have $$1+(k-1)(c+1)<1+k(c+1)-c\le |x'-y'|\le 1+k(c+1)+c<1+(k+1)(c+1),$$ then the only possibility for $(x', y')$ to be in $S$ is $\{x', y'\}=\{x, y\}$.  But after the move, the total number of token must decrease, and it is impossible.  

When $(x', y')$ is a Triangular option of $(x, y)$, and assume that $(x, y)\in S$.  As $0<x<y$, and at least in one of the heaps, the number of tokens should increase, so $(x'. y')\not=(0, 0)$, and hence it should be written as $(x', y')=(a_{\ell}, a_m)$ with $|\ell-m|=1$.  When $x'<x$ and $y<y'$, then $\ell<k<k+1<m$, so it cannot satisfy $|\ell-m|=1$.  When $x<x'$ and $y'<y$, then $m\le k<k+1\le \ell$, and the only possibility for $|\ell-m|=1$ is $m=k$ and $\ell=k+1$, but if so, again $x'+y'=x+y$, contradicting to the decreasing condition of the tokens. Therefore, $(x', y')\not\in S$, and (1) holds.

For (2), we take $(x, y)\not\in S$.  If $x=y$, then by Wythoff option, we can send $(x, y)\to (0, 0)$, so  we may assume $x<y$ by symmetry.  Take $k\in \Z_{\ge 0}$ so that $a_k\le x<a_{k+1}$.  If $x+y>a_k+a_{k+1}$, then by Triangular option, we can send $(x, y)\to (a_{k+1}, a_k)$, so we need only to consider the remaining case, namely when  $x+y\le a_k+a_{k+1}$.  In this case, we have $y\le a_k-(x-a_k)$ with $x-a_k\ge 0$ by the choice of $k$, and hence $a_k\le x<y\le a_{k+1}$.  If $a_k=x$ and $a_{k+1}=y$, then $(x, y)=(a_k, a_{k+1})\in S$, contradicting to our choice of $(x, y)\not\in S$, so we have $y-x<a_{k+1}-a_k=1+k(c+1)$.  Take $\ell=\left[\dfrac{y-x-1}{c+1}\right]$, then $1+\ell(c+1)\le y-x < 1+(\ell+1)(c+1)$.  As $y-x-1<k(c+1)$, we have $\ell<k$, and hence $a_{\ell}<a_k\le x$ and $$a_{\ell+1}=a_{\ell}+1+\ell(c+1)\le a_{\ell}+y-x=y-(x-a_{\ell})<y.$$  Then we can send $(x, y)$ to $(a_{\ell}, a_{\ell+1})\in S$ by $c$-Wythoff option, because \begin{eqnarray*}&&|(y-a_{\ell+1})-(x-a_{\ell})|\\&=&|(y-x)-(1+\ell(c+1))|\\&<&1+(\ell+1)(c+1)-(1+\ell(c+1))\\&=&c+1,\end{eqnarray*}  hence $|(y-a_{\ell+1})-(x-a_{\ell})|\le c$.  We are done. \end{proof}

\begin{remark} Let $g\in \Z_{>0}$ be a positive integer and $c\in \Z_{\ge 0}$ a non-negative integer, it seems that there is a sequence $\{a_1, a_2, \ldots\}$ so that if $(x, y)$ has Grundy number $g$ in Triangular Nim with $c$-Wythoff twist, then either $x+y-1=g$ or $(x, y)=(a_k, a_{k+1})$, or $(x, y)=(a_{k+1}, a_k)$ for some $k$.  For example, the list of positions $(x, y)$ with $x\le y$ and Grundy number $1$ in Triangular Nim with $0$-Wythoff twist is $$\{(1, 1), (0, 2), (2, 3) ,(3, 7), (7, 10), (10, 16), (16, 21), (21, 29), (29, 36), (36, 46), \ldots\}$$ so the sequence in this case seems to be  $2, 3, 7, 10, 16, 21, \ldots$, or $a_k=\dfrac{k^2+k+1-(-1)^k}{2}$.  Moreover when $g$ is small compared to $c\ge 1$, then the sequence seems to be $a_k=\dfrac{(1+c)k^2+(1+2g-c)k}{2}$, which seems to work for $c=1$ and $g=1, 2, 3$.  It fails for $g=4$, where the Grundy number of $(5, 8)$ is $4$ for Triangular Nim with $1$-Wythoff twist, where our sequence above predicted $\{(2, 3), (0, 5), (5, 12), (12, 21), (21, 32), \ldots\}$ as the list of Grundy number $4$. For $c=2$, the first case our formula fails is $(27, 37)$ with Grundy number $26$, even though our formula predicts $(27, 57)$ instead of $(27, 37)$ to have Grundy number $26$. When $c=3$, we have found no failing cases so far. \end{remark}

\section{Geometric Progression}

Let $d\in \Z_{\ge 2}$ be an integer  larger than $1$.  We define Triangular Nim with $d$-geometric twist, and Triangular Nim with $d$-sub-geometric twist, as follows:

\begin{definition} The position sets of {\em Triangular Nim with $d$-geometric twist}, and {\em Triangular Nim with $d$-sub-geometric twist} are the same as Triangular Nim, namely $X=\Z_{\ge 0} \times \Z_{\ge 0}$.  In addition to the options of Triangular Nim, in $d$-geometric twist, the player is allowed to take tokens from both heaps, say $i$ from the first heap and $j$ from the second heap, as far as $$\hbox{$i\le 2j-2$ and $j\le 2i-2$}.$$ 

In $d$-sub-geometric twist, in addition, the player is allowed to take $i$ and $j$ tokens as far as $$\hbox{$i\le 2j-1$ and $j\le 2i-1$.}$$\end{definition}

\begin{theorem} The position $(x, y)$ with $x\le y$ is in the set of \Pps for Triangular Nim with $d$-geometric twist if and only if $(x, y)$ lies in $$\left\{\begin{array}{rclc} \{(0, 0), (0, 1), (1, 1)\} & \cup & \{(1, 2), (2, 4), (4, 8), \ldots, (2^k, 2^{k+1}), \ldots\} &\hbox{ \,\, (when $d=2$)}\\
\{(0, 0), (0, 1)\} & \cup & \{(1, d), (d, d^2), (d^2, d^3), \ldots, (d^k, d^{k+1}), \ldots\} &\hbox{ \,\, (when $d>2$)}\end{array}\right.$$\end{theorem}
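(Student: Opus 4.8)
The plan is to verify that the displayed set, call it $S$, is the set of $\mathcal P$-positions by checking the two conditions recalled in the Preliminary: (1) no option of a position of $S$ lies in $S$, and (2) every position outside $S$ has an option in $S$. By the left--right symmetry of the ruleset we may restrict attention to positions $(x,y)$ with $x\le y$. Writing the $d$-twist as the move that sends $(x,y)$ to $(x-i,y-j)$ with $i,j\ge 1$, $i\le dj-2$, $j\le di-2$, recall that $S$ consists of $(0,0)$, the base positions $(0,1),(1,0)$, the staircase $\{(d^k,d^{k+1})\,|\,k\ge 0\}$ together with its transpose, and additionally $(1,1)$ when $d=2$. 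Two structural facts drive everything: (a) the two coordinates of a staircase point are in ratio exactly $d$, whereas the twist only permits $i/j<d$ and $j/i<d$; and (b) the twist move $(1,1)\to(0,0)$ is available exactly when $1\le d-2$, i.e.\ $d\ge 3$, so $(1,1)$ is terminal and lies in $S$ precisely when $d=2$.

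For (1), let $(x,y)\in S$. The base positions have only base positions (or nothing) as options, so assume $(x,y)=(d^k,d^{k+1})$ with $k\ge 1$. A Triangular option strictly raises one coordinate, strictly lowers the other, and strictly lowers the total; by the monotonicity argument used already in the proof of Theorem~\ref{Wythoff Triangular}, the only conceivable Triangular options lying in $S$ are a staircase point or its transpose, and the strict decrease of the total excludes each of these. A twist option $(d^k-i,\,d^{k+1}-j)$ has both coordinates strictly smaller, so any target in $S$ is a staircase point $(d^\ell,d^{\ell+1})$ with $\ell<k$, a transpose point $(d^{\ell+1},d^\ell)$ with $\ell\le k-2$, or a base position; in these cases one computes $j-di=0$, $j-di=d^\ell(d^2-1)>0$, and $j\ge di-1$ respectively, so always $j>di-2$, contradicting the twist constraint $j\le di-2$. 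Hence $(x,y)$ has no option in $S$.

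For (2), let $(x,y)\notin S$ with $x\le y$. If $x=y$, then $x\ge 2$ and the twist sends $(x,y)\to(0,0)$, since $x\le dx-2$. If $x\le 1$, the position falls into finitely many types --- $(0,y)$ with $y\ge 2$; $(1,y)$ with $y\ge 2$, excluding $y=d$ when $d>2$; and $(1,1)$ when $d>2$ --- and each is sent into $S$ by an explicit move: a Triangular move to $(1,0)$ or to a transpose staircase point, or, when $d>2$, a twist to $(0,1)$ or to $(0,0)$; this is the only place where the cases $d=2$ and $d>2$ diverge. Otherwise $x\ge 2$; pick $k$ with $d^k\le x<d^{k+1}$. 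If $x+y>d^k+d^{k+1}$, a Triangular move sends $(x,y)\to(d^{k+1},d^k)\in S$, the side conditions $i\ge 2$, $j\ge 1$, $j<i$ all following from $d^k\le x<d^{k+1}$ and $x+y>d^k+d^{k+1}$. If $x+y\le d^k+d^{k+1}$, then $y\le d^{k+1}$, and because $(x,y)\ne(d^k,d^{k+1})$ one obtains the strict bound $y\le dx-1$; the twist sending $(x,y)$ to $(0,1)$ (so $i=x$, $j=y-1$) is then legal, since $j=y-1\le dx-2=di-2$ by that bound and $i=x\le dx-2\le d(y-1)-2=dj-2$ using $x\ge 2$ and $x<y$.

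The parts I expect to be routine are (1) and the ``large total'' half of (2); the delicate part is the ``small total'' half of (2) --- the crucial observation being that $(x,y)\notin S$ forces the strict inequality $y\le dx-1$ --- together with, and above all, the enumeration of the positions with $x\le 1$, where the escapes into $S$ depend on whether $d=2$ or $d\ge 3$. That case split is the sole origin of the dichotomy in the statement, and it is the step that will take the most bookkeeping.
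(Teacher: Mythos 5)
Your proof is correct, and while parts (1) and the large-total half of (2) follow the paper's argument closely (the ratio computation $j-di=0$ or $d^{\ell}(d^2-1)$ against the constraint $j\le di-2$, and the Triangular move to $(d^{k+1},d^k)$ when $x+y>d^k+d^{k+1}$), your treatment of the small-total case is genuinely different and substantially simpler than the paper's. The paper, in the case $d^k\le x<d^{k+1}$, $x+y\le d^k+d^{k+1}$, first reduces to $y-x\ge d-1$, then chooses $\ell=\left[\log_d\frac{y-x}{d-1}\right]$ and moves to $(d^{\ell},d^{\ell+1})$ by the twist, which forces a delicate verification of both inequalities and two separately handled edge cases, $(2,3)$ and $(d^k,d^{k+1}-1)$. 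You instead observe that $(x,y)\ne(d^k,d^{k+1})$ forces the strict bound $y\le dx-1$ (which checks out: if $x=d^k$ then $y\le d^{k+1}-1$, and if $x\ge d^k+1$ then $y\le d^k+d^{k+1}-x\le d^{k+1}-1<dx-1$), whence the single uniform twist $(x,y)\to(0,1)$ is always legal; this absorbs all the paper's edge cases at once. The trade-off is that the paper's construction of a move to an intermediate staircase point is the template reused for the Wythoff and sub-geometric variants, whereas your shortcut is specific to this ruleset. Two small slips to repair in writing it up: in part (1) you restrict to $k\ge 1$, but $k=0$ (the point $(1,d)$) belongs to the staircase and must be included --- your argument covers it verbatim since the lists of candidate targets only shrink; and your opening claim that $x=y$ forces $x\ge 2$ is false for $d>2$, where $(1,1)\notin S$ --- you do catch $(1,1)$ in the $x\le 1$ enumeration, but the two cases should be ordered consistently. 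The $x\le 1$ enumeration itself, though compressed, is complete: every listed type does have the escape you name.
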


\begin{proof} Let $S$ be the stated candidate for the set of \Pps, and we need to show that \begin{enumerate}
\item[(1)] If $(x, y)\in S$, then any of its option $(x', y')$ is outside $S$.
\item[(2)] If $(x, y)\not\in S$, then its some option $(x', y')\in S$.
\end{enumerate}

For (1), for $(x, y)=(0, 0)$ or $(0, 1)$, there is no option.  When $d=2$, $(1, 1)$ has no option too, because if $i\le 2j-2$ and $j\le 2i-2$, then $i$ and $j$ must be larger than $1$.  Let us take $k\in \Z_{\ge 0}$ and let $(x, y)=(d^k , d^{k+1})$, and we need to show that no option lies in $S$.  Assume that one of $(0, 0), (0, 1), (1, 0)$ and $(1, 1)$ is one of its options by the $d$-geometric option, say  $(x, y)\to (x-i, y-j)$, then  $i=d^k$ or $d^k-1$ and $j=d^{k+1}$ or $d^{k+1}-1$.  Then we have $$di-2\le d\cdot d^k-2=d^{k+1}-2<d^{k+1}-1\le j$$ hence the condition $di-2\le j$ cannot be satisfied, and this move is not allowed in the geometric option.  In Triangular option, we need to increase at least one heap, and both $x$ and $y$ are at least $1$, we cannot move to $(0, 0), (0, 1), (1, 0)$ or $(1, 1)$.

We need to show that $(x, y)=(d^k, d^{k+1})$ cannot be sent to $(d^{\ell}, d^m)$ with $|\ell-m|=1$.  If $(x, y)$ is sent to $(x', y')$ by Triangular option, then either $x'<x$ and $y<y'$, or $x<x'$ and $y'<y$.  In the case $x'<x$ and $y<y'$, $\ell<k<k+1<m$, hence $(x', y')\not\in S$.  In the case $x<x'$ and $y'<y$, we have $k<\ell$ and $m<k+1$, so the only possibility for $|\ell-m|=1$ i $\ell=k+1$ and  $m=k$, but then it contradicts to our rule $x'+y'<x+y$, so Triangular option cannot send $(x, y)=(d^k, d^{k+1})$ to a position in $S$.  Assume that we can send $(x, y)=(d^k, d^{k+1})$ to $(x', y')=(x-i, y-j)=(d^{\ell}, d^{\ell+1})$, then $\ell<k$, and $i=d^k-d^{\ell}$ and $j=d^{k+1}-d^{\ell+1}$, and then $j=di>di-2$, so the condition $j\le di-2$ is not satisfies.  Also if we try to send $(x, y)=(d^k, d^{k+1})$ to $(x', y')=(x-i, y-j)=(d^{\ell+1}, d^{\ell})$, then $j=d^{k+1}-d^{\ell}>d^{k+1}-d^{\ell+2}=di>di-2$, again the condition $j<di-2$ fails.  Therefore, positions in $S$ cannot be sent to a position in $S$.

Now we show (2).   We take $(x, y)\not\in S$ with $x\le y$.  If $x=y$, then $d$-geometric option allows us to send $(x, y)\to (0, 0)\in S$, so we may assume $x<y$.  If $x=0$, then $2\le y$, and hence Triangular option allows us to send $(x, y)$ to $(1, 0)\in S$, so we may assume $x\ge 1$, and we can take $k\in \Z_{\ge 0}$ so that $d^k\le x<d^{k+1}$.  If $x+y>d^k+d^{k+1}$, then by Triangular option, we can send $(x, y) \to  (d^{k+1}, d^k)$, so we may assume $x+y\le d^k+d^{k+1}$, or $y\le d^{k+1}-(x-d^k)\le d^{k+1}$.  We write $\alpha=x-d^k\ge 0$ and $\beta=d^{k+1}-y\ge 0$.  Moreover if $\alpha=\beta=0$, then $(x, y)=(d^k, d^{k+1})$, which contradicts to our assumption that $(x, y)\not\in S$, so either $\alpha>0$ or $\beta>0$, and in particular, $y-x<d^{k+1}-d^k$.  

Now we show that we may assume $y-x\ge d-1$.  First, if $d=2$ and $x=1$, then $3\le y$ and $(x, y)$ can be sent to $(2, 1)$ by Triangular option.  Also if either $d>2$ or $x\ge 2$, then we show that $(x, y)$ can be sent to $(0, 1)$ by $d$-geometric option.  In this case, $i=x, j=y-1\le x+d-2$, and $dj-2-i\ge (d-1)x+d^2-2d-2$, which must be larger than $0$, by the condition $d>2$ or $x\ge 2$.  

As $y-x\ge d-1$, we can choose $\ell\in \Z_{\ge 0}$ so that $$d^{\ell}(d-1)\le y-x<d^{\ell+1}(d-1).$$  In fact we may take $$\ell=\left[\log_d\dfrac{y-x}{d-1}\right],$$ which is non-negative because $y-x\ge d-1$.  As $d^{\ell}(d-1)\le y-x<d^k(d-1)$, so $\ell<k$, and $d^{\ell}<x, d^{\ell+1}<y$.  Now $d$-geometric option allow us to send $(x, y)\to (d^{\ell}, d^{\ell+1})$ for most of the cases, because by setting $(x-i, y-j)=(d^{\ell}, d^{\ell+1})$, we have $i=d^k+\alpha-d^{\ell}, j=d^{k+1}-\beta-d^{\ell+1}$, and as $d^{\ell}(d-1)\le y-x=d^{k+1}-d^k-(\alpha+\beta)$, we obtain $\alpha+\beta\le (d-1)(d^k-d^{\ell})$, and hence 
\begin{eqnarray*} dj-2-i&=&d^{k+2}-d\beta-d^{\ell+2}-d^k-\alpha+d^{\ell} \\
&\ge & d^{k+2}-d^{\ell+2}-d^k+d^{\ell}-d(\beta+\alpha)-2\\
&\ge&d^{k+2}-d^{\ell+2}-d^k+d^{\ell}-d(d-1)(d^k-d^{\ell})-2\\
&=&(d-1)(d^k-d^{\ell})-2\\
&\ge&-1,\end{eqnarray*} 

where the inequality of the third line follows from  $\alpha+\beta\le (d-1)(d^k-d^{\ell})$.  Notice that in the inequality in the last line, the equality holds if and only if $d=2, k=1$ and $\ell=1$, which happens only when $(x, y)=(2, 3)$, but then it can be sent to $(0, 1)$ by $2$-geometric option, and otherwise $i\le dj-2$.

Also for the other inequality, we have
\begin{eqnarray*} di-2-j&\ge& d(d^k+\alpha-d^{\ell})-2-(d^{k+1}-\beta)\\
&=&d\alpha+\beta-2.\end{eqnarray*} If $\alpha>0$, then the right hand side is $d\alpha+\beta-2\ge d-2\ge 0$, so $j\le di-2$.  So the only possibility for $j>di-2$ is $\alpha=0$ and $\beta=1$, so $(x, y)=(d^k, d^{k+1}-1)$, but then $d$-geometric option can send $(d^k, d^{k+1}-1)\to (0, 1)$, because in this case, $i=d^k$ and $j=d^{k+1}-2=di-2$, and $dj-2-i=d^k(d^2-1)-2d$, which is larger than $0$ if $k>0$, and if $k=0$, $dj-2-i=d^2-2d-1$, which is negative only for $d=2$, but then $(x, y)=(2^0, 2^1-1)=(1, 1)\in S$, contradicting to our assumption $(x, y)\not\in S$.
\end{proof}

\begin{theorem}
The position $(x, y)$ with $x\le y$ is in the set of \Pps for Triangular Nim with $d$-sub-geometric twist if and only if $(x, y)$ is in $$\{(0, 0), (0, 1)\}\cup \{(1, \dfrac{d^2-1}{d-1}), (\dfrac{d^2-1}{d-1}, \dfrac{d^3-1}{d-1}), \ldots, (\dfrac{d^k-1}{d-1}, \dfrac{d^{k+1}-1}{d-1}), \ldots\}.$$
\end{theorem}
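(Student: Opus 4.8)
The plan is to mimic the proof of the preceding theorem, replacing the geometric sequence $d^k$ by the sequence $c_k:=\dfrac{d^k-1}{d-1}=1+d+\cdots+d^{k-1}$, so that $c_0=0$, $c_1=1$, $c_{k+1}-c_k=d^k$, and — the identity that drives everything — $c_{k+1}=d\,c_k+1$ for all $k$. Write $S$ for the claimed set of \Pps, i.e. $S=\{(0,0)\}\cup\{(c_k,c_{k+1})\,|\,k\ge 0\}\cup\{(c_{k+1},c_k)\,|\,k\ge 0\}$ (note $(c_0,c_1)=(0,1)$). As in the earlier proofs it suffices to verify: (1) no option of a position in $S$ lies in $S$; (2) every position outside $S$ has an option in $S$.

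For (1): $(0,0)$ and $(0,1)$ have no options, so take $(x,y)=(c_k,c_{k+1})$ with $k\ge 1$ (reversed pairs by symmetry). A Triangular option strictly increases one coordinate, strictly decreases the other, and strictly decreases the sum; comparing the indices of $c_\bullet$ in a would-be target $(c_\ell,c_{\ell+1})$ or $(c_{\ell+1},c_\ell)$ and using that $c_\bullet$ is strictly increasing while the sum $c_k+c_{k+1}$ cannot be decreased to another value $c_\ell+c_{\ell+1}$ of this shape rules out all Triangular options, exactly as in the geometric case. Since a sub-geometric option $(x,y)\mapsto(x-i,y-j)$ strictly decreases both coordinates, any $S$-target has first coordinate $<c_k$ and second $<c_{k+1}$; landing at $(c_\ell,c_{\ell+1})$ with $\ell<k$ forces $j=c_{k+1}-c_{\ell+1}=d(c_k-c_\ell)=d\,i$, violating $j\le d\,i-1$; landing at $(c_{\ell+1},c_\ell)$ forces $j=c_{k+1}-c_\ell>c_{k+1}-c_{\ell+2}=d(c_k-c_{\ell+1})=d\,i$, again violating $j\le d\,i-1$; and landing at $(0,0)$ forces $j=c_{k+1}=d\,c_k+1>d\,i-1$. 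Hence (1).

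For (2): by symmetry take $(x,y)\notin S$ with $x\le y$. If $x=y\ge 1$, the sub-geometric move $i=j=x$ (legal since $x\le d\,x-1$) sends $(x,y)$ to $(0,0)$. If $x=0<y$ then $y\ge 2$, and the Triangular move removing $y$ from the second heap and returning $1$ to the first sends $(x,y)$ to $(1,0)$. If $1\le x<y$, let $k\ge 1$ be largest with $c_k\le x$, so $c_k\le x<c_{k+1}$. If $x+y>c_k+c_{k+1}$, the Triangular move to $(c_{k+1},c_k)$ is legal: $i=y-c_k\ge 2$ (because $y>c_k+c_{k+1}-x\ge c_k+1$), $j=c_{k+1}-x\ge 1$, and $j<i$ is precisely $x+y>c_k+c_{k+1}$. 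Otherwise $x+y\le c_k+c_{k+1}$, which forces $y\le c_{k+1}$ and, distinguishing $x=c_k$ from $x\ge c_k+1$, also $y\le d\,x$; moreover $(x,y)\ne(c_k,c_{k+1})$ gives $y-x<c_{k+1}-c_k=d^k$. Let $\ell\ge 0$ be largest with $d^\ell\le y-x$; then $\ell<k$, so $c_\ell\le c_{k-1}<c_k\le x$, and $c_{\ell+1}=c_\ell+d^\ell\le c_\ell+(y-x)<y$. The sub-geometric move with $i=x-c_\ell\ge 1$ and $j=y-c_{\ell+1}\ge 1$ then lands at $(c_\ell,c_{\ell+1})\in S$: the constraint $j\le d\,i-1$ follows from $j-d\,i=y-d\,x-1\le -1$, and $i\le d\,j-1$ follows from $i\le j$ (since $j-i=(y-x)-d^\ell\ge 0$) together with $j\le d\,j-1$. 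This gives (2).

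The main obstacle is the bookkeeping in this last case of (2): one must pick $\ell$ so that all four conditions $i\ge 1$, $j\ge 1$, $i\le d\,j-1$, $j\le d\,i-1$ hold simultaneously, and several of them are tight (for instance $j=d\,i-1$ when $y=d\,x$), leaving no slack; the choice $\ell=\lfloor\log_d(y-x)\rfloor$ together with the inequality $y\le d\,x$ coming from the case hypothesis is exactly what makes everything fit. The remaining steps are the same monotonicity and index-counting already used for the $d$-geometric twist, with $d\cdot d^k=d^{k+1}$ replaced throughout by $d\,c_k+1=c_{k+1}$.
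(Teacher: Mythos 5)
Your proposal is correct and follows essentially the same route as the paper: verify the candidate set $S$ by the standard two conditions, using the identity $c_{k+1}=d\,c_k+1$ to rule out sub-geometric moves within $S$, the Triangular move to $(c_{k+1},c_k)$ when $x+y>c_k+c_{k+1}$, and a sub-geometric move to $(c_\ell,c_{\ell+1})$ with $d^\ell\le y-x<d^{\ell+1}$ otherwise. The only (harmless) differences are that you absorb the paper's separate case $y-x<d$ (move to $(0,1)$) into the uniform choice $\ell\ge 0$ via the inequality $y\le dx$, and you check the reversed targets $(c_{\ell+1},c_\ell)$ explicitly, which the paper leaves implicit.
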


In particular, when $d=2$, the \Pps with $x\le y$ looks like $$(0, 0), (0, 1),  (1, 3), (3, 7), (7, 15), (15, 31), (31, 63), \ldots$$
which is described by Mersenne numbers.

\begin{proof}  Again we let $S$ be the stated candidate for the set of \Pps, and we need to show that \begin{enumerate}
\item[(1)] If $(x, y)\in S$, then any of its option $(x', y')$ is outside $S$.
\item[(2)] If $(x, y)\not\in S$, then its some option $(x', y')\in S$.
\end{enumerate}

For (1), there are no options from $\{(0, 0), (0, 1)\}$.  Let $a_k=\dfrac{d^k-1}{d-1}, k=1, 2, 3, \ldots$, then the argument that Triangular options of $S$ is out of $S$ is now standard in this paper, and is omitted.  For $d$-sub-geometric options, if $(a_k , a_{k+1})$ is sent to $(a_k-i, a_{k+1}-j)=(a_{\ell}, a_{\ell+1})$, then $k>\ell$ and $j=d \dfrac{d^k-d^{\ell}}{d-1}=di$, this move is not allowed.  If $(a_k, a_{k+1})$ is sent to $(a_k-i, a_{k+1}-j)=(0, 0), (0, 1)$ or $(1, 0)$, then $i\le1+d+d^2+\cdots+c^{k-1}$, $j\ge d+d^2+\cdots+c^k=d(1+d+\cdots+d^{k-1})\ge di>di-1$, so again this move is not allowed.  

For (2), if $x=0$, then $y\ge 2$ and we have Triangular option $(0, y)\to (1, 0)$, so we may assume that $x>0$.  Also if $x=y$, then $(x, y)$ has a $d$-sub-geometric option $(0, 0)$, so we may assume $x<y$.  For such $(x, y)\not\in S$, take $k\in \Z_{\ge 0}$ so that $a_k\le x<a_{k+1}$.  If $x+y>a_k+a_{k+1}$, then Triangular option can send $(x, y)\to (a_{k+1}, a_k)$ so we may assume $y\le a_{k+1}-(x-a_k)\le a_{k+1}$, and we set $\alpha=x-a_k\ge 0$ and $\beta=a_{k+1}-y$.  We have $\alpha\ge 0, \beta\ge 0$, and if $\alpha=\beta=0$, it contradicts to our assumption that $(x, y)\not\in S$, so $\alpha+\beta>0$, and $y-x<a_{k+1}-a_k$.  

When $y-x<d$, then we show that $(x, y)$ can be sent to $(x-i, y-j)=(0, 1)$ by $d$-sub-geometric option.  In this case, we have $i=x, j=y-1$, and we need to check $i\le dj-1$ and $j\le di-1$.   As $x<y$, $dj-1-i\ge d(x+1)-1-x=(d-1)(x+1)\ge 2$.  Also because $y\le x+d-1$, we have $di-j\ge dx-x-d+1=(d-1)(x-1)\ge 0$.  So We may assume $y-x\ge d=a_2-a_1$.  We take $\ell$ so that $a_{\ell+1}-a_{\ell}\le y-x<a_{\ell+2}-a_{\ell+1}$, or equivalently,
$d^{\ell}\le y-x<d^{\ell+1}$.
As $a_{\ell+1}-a_{\ell}=d^{\ell}\le y-x < a_{k+1}-a_k=d^k$, the inequality $\ell<k$ follows, hence $y>x\ge a_k\ge a_{\ell+1}>a_{\ell}$.  We show that $(x, y)$ can be sent to $(x-i, y-j)=(a_{\ell}, a_{\ell+1})$ by the $d$-sub-geometric option.  In this case, $$i=a_k+\alpha-a_{\ell}=d^{\ell}+d^{\ell+1} +\cdots+d^{k-1}+\alpha$$
and $$j=a_{k+1}-\beta-a_{\ell+1}=d^{\ell+1}+d^{\ell+2}+\cdots+d^k-\beta.$$
Then $di-j=d\alpha+\beta\ge 1$ because at least one of $\alpha$ and $\beta$ is positive, hence $j\le di-1$.  

As $y-x=d^k-(\alpha+\beta)\ge a_{\ell+1}-a_{\ell}=d^{\ell}$, we have $\alpha+\beta\le d^k-d^{\ell}$, and hence \begin{eqnarray*} dj-i&=&d^{\ell+2}+d^{\ell+3}+\cdots+d^{k+1}-d\beta-d^{\ell}-\cdots-d^{k-1}-\alpha\\
&\ge& d^{k+1}+d^k-d^{\ell+1}-d^{\ell}-d(\alpha+\beta)\\
&\ge & d^k-d^{\ell}>0,\end{eqnarray*} therefore we have $i\le dj-1$, and our option $(x, y)\to (a_{\ell}, a_{\ell+1})$ is justified.  \end{proof}

\begin{question} Given a sequence $a_1, a_2, a_3, a_4, \ldots, $ then can you cook up some natural ruleset so that \Pps $(x, y)$ with $x\le y$ are $\{(a_1, a_2), (a_2, a_3), (a_3, a_4), \ldots\}$, possibly finitely many exceptions?   What conditions do we need for the sequence?   For example, can you find a natural ruleset for Fibonacci sequence?  \end{question}

\section{$3$ heaps}

There are several ways to generalize Triangular Nim to $3$ heaps.  Some rulesets give trivial \Pps, but some are not.

\begin{definition}
Let $X=\Z_{\ge 0}\times \Z_{\ge 0}\times \Z_{\ge 0}$ be the set of positions, namely we have $3$ heaps of tokens.  
\begin{enumerate}
\item[(1)] The player takes more than $1$ token from one heap, choose another heap, and returns at least $1$ token to that heap, so that the total number of tokens in the heaps decreases.
\item[(2)] The player takes more than $1$ token from one heap, and returns tokens to the other $2$ heaps, both of them at least $1$, so that the total number of tokens in the heaps decreases.
\item[(3)] The player takes more than $1$ token from one  heap, and returns tokens to the other $2$ heaps, at least $1$ in total, but she/he may return only to one heap, so that the total number of tokens in the heaps decreases.
\item[(4)] The player choose $2$ heaps, take at least $1$ token from both of them, and return at least $1$ token to the other heap, so that the total number of tokens in the heaps decreases.
\item[(5)]The player takes more than $1$ token from one heap, choose another heap, and returns at least $1$ token to that heap, so that the total number of tokens in the heaps decreases.  But once a heap loses all its tokens, then that heap is removed.
\item[(6)] The player takes more than $1$ token from one heap, and returns tokens to the other $2$ heaps, both of them at least $1$, so that the total number of tokens in the heaps decreases.  But once a heap loses all its tokens, then that heap is removed.
\end{enumerate}\end{definition}

The \Pps are, for $(x, y, z)$ with $x\le y\le z$, as follows: 

For the ruleset (1), $$\{(0,0,1)\}\cup \{(0,a, a)\, | \, a\in \Z_{\ge 0}\}\cup \{(a, a, a)\, |\, a\in \Z_{\ge 0}\}\cup \{(a, a, 2a)\, |\, a\in \Z_{\ge 0}\}.$$

For the ruleset (2), $$\{(x, y, z)\, |\, z\le 2\}\cup \{(0, y, z)\, |\, z\le y+1\} \cup \{(1, y, y)\, |\, y\in \Z_{\ge 1}\}.$$

For the ruleset (3), $$\{(x, y, z)\, |\, z\le 1\} \cup \{(1, 1, 2)\} \cup \{(0, y, y)\, |\, y\in \Z_{\ge 0}\}.$$

For the ruleset (4), $$\{(0, 0, z)\, |\, z\in \Z_{\ge 0}\}.$$

For the ruleset (5), $$\{(x, y, z)\, |\, z\le 1\} \cup\{(0, 0, z)\, |\, z\in\Z_{\ge 0}\}\cup \{(1, y, y)\, |\, y\in \Z_{\ge 1}\}\cup\{(1, 1, 2)\}\cup \{(x, x, 2x-1)\, |\, x\in \Z_{\ge 1}\}.$$

 For the ruleset (6), $$\{(x, y, z)\, |\, z\le 2\} \cup \{(2, 2, 3)\} \cup \{(0, 0, z)\, |\, z\in \Z_{\ge 0}\} \cup \{(1, y ,y) \, |\, y\in \Z_{\ge 1}\}$$
 
 \section{Transfinite Nim}
 
 Fix some infinite set $Y$, and consider all the ordinal numbers whose cardinality is less than or equal to $|Y|$, and we work in the set of these ordinal numbers, which we will write as $\OO$.  Cantor normal form allows us to write an ordinal $\alpha\in \OO$ as $$\alpha=A_1\omega^{a_1}+A_2\omega^{a_2}+\cdots+A_r\omega^{a_r}$$ uniquely, where $r\in \Z_{\ge 0}$ is a non-negative integer, $A_i\in \Z_{>0}, (i=1, 2, \ldots, r)$ are positive integers, $\omega=\{0, 1, 2, 3, \ldots\}$ is the smallest infinite ordinal number, $a_1>a_2>\cdots>a_r$ are strictly decreasing ordinal numbers \cite[Thm 2, page 320]{Si}.  
 
 Ordinal numbers can be regarded as positions in Conway's surreal numbers \cite{C}, and hence we can freely add them as if $\omega$ is a free variable, and this semi-group is embedded in the group of surreal numbers.  This sum of ordinal numbers agree with natural sums \cite[p363]{Si} defined by Hessenberg.  Transfinite Nim games are treated in \cite[p330]{BCG}.  Using this notion of natural sum, we can play transfinite Wythoff Nim, transfinite Yama Nim, and transfinite Triangular Nim.
 
 \begin{definition} Let $Y=\OO\times \OO$ be the set of positions.
In transfinite Wythoff Nim, the player can move from $(\alpha, \beta)$ to either $(\alpha', \beta)$ with $\alpha'<\alpha$, $(\alpha, \beta')$ with $\beta'<\beta$ or to $(\alpha', \beta')$ with $\alpha+\beta'=\beta+\alpha'$.  

In transfinite Yama Nim, the player can move from $(\alpha, \beta)$ to either $(\alpha', \beta+1)$ with $\alpha'+1<\alpha$ or $(\alpha+1, \beta')$ with $\beta'+1<\beta$. 

In transfinite Triangular Nim, the player can move from $(\alpha, \beta)$ to $(\alpha', \beta')$, either 
\begin{enumerate}
\item[(i)] $\alpha'<\alpha$ and $\beta<\beta'$, or
\item[(ii)] $\alpha'>\alpha$ and $\beta>\beta'$
\end{enumerate}
in such a way that $\alpha+\beta>\alpha'+\beta'$.
\end{definition}

As $\OO$ is a well-ordered set, any strictly decreasing sequence terminates in finitely many steps, and in all definitions above, the total (in the sense of natural sum) number of tokens in the heaps decreases strictly, so even though they may have infinitely many tokens, the games always end in finitely many steps, and we can decide the winner.

\begin{theorem} In transfinite Wythoff game, $(\alpha, \beta)$ with $\alpha=A_1\omega^{a_1}+A_2\omega^{a_2}+\cdots+A_r\omega^{a_r}$ and $\beta=B_1\omega^{b_1}+B_2\omega^{b_2}+\cdots+B_s\omega^{a_s}$, is in \Pps if and only if $r=s$, $a_1=b_1, a_2=b_2, \ldots, a_r=b_r$, and $(A_i, B_i)\in \Z_{>0}\times \Z_{>0}$ are in \Pps of finite Wythoff Nim for all $i=1, 2, \cdots, r$.
In transfinite Yama Nim and transfinite Triangular Nim, $(\alpha, \beta)$ is in \Pp if and only if either $\alpha+1=\beta$ or $\beta+1=\alpha$.
\end{theorem}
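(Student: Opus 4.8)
The plan is to handle each of the three claims by the device used repeatedly in this paper: exhibit a set $S$ of positions and check that (1) no option of a position in $S$ lies in $S$, and (2) every position outside $S$ has an option in $S$; then $S$ is the set of $\mathcal P$-positions. This is legitimate even though these games are not short: along every move the natural sum $\alpha\oplus\beta$ of the two coordinates strictly decreases and $\OO$ is well ordered, so every play terminates and the outcome is still determined by conditions (1)--(2), verified by induction on this well-founded relation. For transfinite Yama Nim and transfinite Triangular Nim I would take $S=\{(\alpha,\beta)\,|\,\alpha=\beta\text{ or }\alpha+1=\beta\text{ or }\beta+1=\alpha\}$, the transfinite form of the $\mathcal P$-sets of Theorems~\ref{yama1} and~\ref{tri2}; for transfinite Wythoff Nim, $S$ is the set described in the statement.

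For the Yama and Triangular games the only inputs are that $\alpha'<\alpha$ iff $\alpha'+1\le\alpha$ (no ordinal lies strictly between $\alpha'$ and $\alpha'+1$) and that $\oplus$ is strictly monotone and cancellative. For condition~(1) I would check that every option of $(\alpha,\alpha)$ or $(\alpha,\alpha+1)$ has its two coordinates differing by at least $2$; this follows directly from the move constraints ($\alpha'+1<\alpha$, $\beta'+1<\beta$ for Yama; the sign conditions $\alpha'<\alpha<\beta'$ or $\alpha'>\alpha>\beta'$ together with the strict decrease of $\alpha\oplus\beta$ for Triangular), the only slightly delicate point being that for Triangular the move $(\alpha,\alpha+1)\to(\alpha+1,\alpha)$ is excluded precisely because it would leave $\alpha\oplus\beta$ unchanged. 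For condition~(2), given $(\alpha,\beta)\notin S$ with $\beta\ge\alpha+2$ (without loss of generality), the move $(\alpha,\beta)\to(\alpha+1,\alpha)$ is legal and lands in $S$: for Yama it removes at least $2$ from the second heap and returns one token; for Triangular it is a move of the second type with $(\alpha+1)\oplus\alpha<\alpha\oplus\beta$, which holds because $\beta\ge\alpha+2$. This finishes both games.

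For transfinite Wythoff Nim I would pass to Cantor normal form. Fixing the finite decreasing list $e_1>\dots>e_k$ of all exponents occurring in $\alpha$ or $\beta$ and padding both, identify $\alpha,\beta$ with tuples $(a_i),(b_i)\in\Z_{\ge 0}^{k}$; then $\oplus$ is coordinatewise addition and the ordinal order is the lexicographic order. The three Wythoff moves become: (i) replace $(a_i)$ by any lexicographically smaller tuple; (ii) the same for $(b_i)$; (iii) subtract a single integer tuple $(\delta_i)$ from both, with $\delta_i\le\min(a_i,b_i)$ for every $i$ and the first nonzero $\delta_i$ positive (note that lower entries may be negative, i.e. a less significant digit may increase). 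Let $S$ be the set of pairs for which each $(a_i,b_i)$ is a $\mathcal P$-position of finite Wythoff Nim; since $(0,0)$ is the only Wythoff $\mathcal P$-position with a zero coordinate, this already forces $a_i=0\iff b_i=0$, so $S$ coincides with the set in the statement. I would invoke the standard facts that the $\mathcal P$-set of finite Wythoff Nim is closed under its moves and that, from the Beatty partition behind the $\mathcal P$-positions, for each value $v$ there is a unique Wythoff $\mathcal P$-position with first coordinate $v$, a unique one with second coordinate $v$, and for each integer $m$ a unique one whose coordinates differ by $m$. Condition~(1) is then a local check at the first coordinate $p$ a move changes: a type-(i) or (ii) move turns the $\mathcal P$-position $(a_p,b_p)$ into a pair with exactly one coordinate made smaller, hence not a $\mathcal P$-position by uniqueness; a type-(iii) move has $\delta_p>0$ and reduces the (necessarily nonzero, so $\neq(0,0)$) $\mathcal P$-position $(a_p,b_p)$ diagonally, which by closure is no longer a $\mathcal P$-position.

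For condition~(2), given $(\alpha,\beta)\notin S$ I would take the first $q$ with $(a_q,b_q)$ not a Wythoff $\mathcal P$-position; being an $\mathcal N$-position of finite Wythoff Nim it has a move of one of the three finite types to some Wythoff $\mathcal P$-position, and I replay that move at coordinate $q$ while, at each coordinate $i>q$, independently driving the pair onto the finite $\mathcal P$-set: by a type-(i)/(ii) move setting the altered coordinate at each $i>q$ to the unique value making that pair a $\mathcal P$-position, if the coordinate-$q$ move was of type (i)/(ii); and by a type-(iii) move with $\delta_i=0$ for $i<q$, $\delta_q=d$ (the diagonal amount), and $\delta_i=a_i-a_i^{\circ}$ for $i>q$, where $(a_i^{\circ},b_i^{\circ})$ is the unique Wythoff $\mathcal P$-position with $b_i^{\circ}-a_i^{\circ}=b_i-a_i$, if the coordinate-$q$ move was diagonal. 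In every case the coordinates below $q$ keep their original $\mathcal P$-values, coordinate $q$ is repaired, the coordinates above $q$ land on $\mathcal P$-positions, and the first changed coordinate is $q$ with the correct sign, so the move is legal and lands in $S$. The step I expect to be the real obstacle is exactly this diagonal sub-case: a transfinite diagonal move is forced to act on all higher coordinates at once, so one must know that each of them can still be placed on the finite $\mathcal P$-set --- which works only because a diagonal move may increase the lower digits ($\delta_i<0$) and because every coordinate-difference is realised by a unique Wythoff $\mathcal P$-position. Everything else is the routine closure/reachability bookkeeping familiar from the earlier proofs.
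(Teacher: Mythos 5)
The paper states this theorem without proof (the section ends immediately after the statement), so there is nothing to compare your argument against; judged on its own, your proposal is essentially a correct and complete outline. The reduction to the two closure conditions via well-foundedness of the natural sum is exactly the right justification for why the finite-game machinery still applies, and the Wythoff part --- working in Cantor normal form, using uniqueness of the Wythoff $\mathcal P$-partner for a fixed coordinate and uniqueness of the $\mathcal P$-position with a given coordinate difference, and repairing the first bad exponent while independently re-aligning all lower exponents (exploiting that a transfinite diagonal move may \emph{increase} less significant digits) --- is the genuine content of the theorem and you have identified and resolved the one real difficulty correctly.

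Two caveats. First, your set $S$ for transfinite Yama and Triangular Nim includes the diagonal $\alpha=\beta$, whereas the printed statement does not. Your version is the correct one: $(0,0)$ is terminal and hence a $\mathcal P$-position, and more generally $(\alpha,\alpha)$ has no option of the form $(\gamma,\gamma+1)$ or $(\gamma+1,\gamma)$, so the statement as printed is not even internally consistent; it should read ``$\alpha=\beta$, $\alpha+1=\beta$, or $\beta+1=\alpha$,'' matching the finite case $|x-y|\le 1$. You should say explicitly that you are proving this corrected statement. Second, in the Wythoff part your coordinatisation fixes the exponent list of $(\alpha,\beta)$ in advance, but an option can introduce exponents absent from both heaps: a type-(i)/(ii) move may replace $\alpha$ by an ordinal with new terms, and a diagonal move may deposit an equal positive coefficient $c$ at a fresh exponent (since $0+c=0+c$ satisfies $\alpha+\beta'=\beta+\alpha'$ there). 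This only affects condition (1), and it is harmless --- at a new exponent the resulting coefficient pair is $(c,0)$, $(0,c)$ or $(c,c)$ with $c>0$, none of which is a finite Wythoff $\mathcal P$-position, so such options never land in $S$ --- but the check must be stated, since your ``local check at the first changed coordinate'' as written quantifies only over the original exponent list.
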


We also studied Grundy numbers of transfinite Triangular Nim, which will appear elsewhere.

\end{document}